\newcommand{\R}{\mathbb{R}}
\newcommand{\E}{\mathbb{E}}
\newcommand{\PP}{\mathbb{P}}
\newcommand{\N}{\mathbb{N}}
\newcommand{\si}{\sigma}
\newcommand{\ep}{\varepsilon}
\newcommand{\n}{\mathcal N}
\newcommand{\f}{\mathcal F}
\newcommand{\proba}{(\Omega ,\mathcal{F},(\f_t)_{t\in \R},\PP)}
\newcommand{\prob}{(\Omega ,\mathbb{F},\PP)}
\newcommand{\probp}{(\Omega' ,\mathbb{F}',\PP')}
\newcommand{\toop}{\stackrel{\PP}{\longrightarrow}}
\newcommand{\ucp}{\stackrel{u.c.p.}{\longrightarrow}}
\newcommand{\schw}{\stackrel{d}{\longrightarrow}}
\newcommand{\eqschw}{\stackrel{d}{=}}
\newcommand{\stab}{\stackrel{d_{st}}{\longrightarrow}}
\newcommand{\BS}{\overline{S}}
\newcommand{\bS}{\underline{S}}
\newcommand{\bee}{\begin{equation}}
\newcommand{\eee}{\end{equation}}
\newcommand{\bea}{\begin{eqnarray}}
\newcommand{\eea}{\end{eqnarray}}
\newcommand{\bean}{\begin{eqnarray*}}
\newcommand{\eean}{\end{eqnarray*}}
\renewcommand{\theequation}{\arabic{section}.\arabic{equation}}
\newtheorem{prop}{Proposition}[section]
\newtheorem{lem}[prop]{Lemma}
\newtheorem{ex}[prop]{Example}
\newtheorem{theo}[prop]{Theorem}
\begin{document}

\title{Limit theorems for general functionals of Brownian local times \thanks{Nicolas Lengert and Mark
Podolskij 
gratefully acknowledge financial support of ERC Consolidator Grant 815703
``STAMFORD: Statistical Methods for High Dimensional Diffusions''.}}
\author{
Simon Campese \thanks{Department of Mathematics, TU Hamburg, Email: 
simon.campese@tuhh.de}
\and
Nicolas Lengert \thanks{Department of Mathematics, University of Luxembourg, Email: nicolas.lengert@uni.lu.} 
\and
Mark Podolskij  \thanks{Department of Mathematics, University of Luxembourg, Email: mark.podolskij@uni.lu.} 
}

\maketitle

\begin{abstract}

In this paper, we present the asymptotic theory for integrated functions of increments of Brownian local times in space. Specifically, we determine their first-order limit, along with the asymptotic distribution of the fluctuations. Our key result establishes that a standardized version of our statistic converges stably in law towards a mixed normal distribution.
Our contribution builds upon a series of prior works by S. Campese, X. Chen, Y. Hu, W.V. Li, M.B. Markus, D. Nualart and J. Rosen  \cite{C17,CLMR10,HN09,HN10,MR08,R11,R11b}, which delved into special cases of the considered problem. Notably, \cite{CLMR10,HN09,HN10,R11,R11b} explored quadratic and cubic cases, predominantly utilizing the method of moments technique, Malliavin calculus and Ray-Knight theorems to demonstrate asymptotic mixed normality. Meanwhile, \cite{C17} extended the theory to general polynomials under a non-standard centering by exploiting Perkins’  semimartingale representation of local time and the Kailath-Segall formula.
In contrast to the methodologies employed in \cite{CLMR10,HN09,HN10,R11}, our approach relies on infill limit theory for semimartingales, as formulated in \cite{J97, JS03}. Notably, we establish the limit theorem for general functions that satisfy mild smoothness and growth conditions. This extends the scope beyond the polynomial cases studied in previous works, providing a more comprehensive understanding of the asymptotic properties of the considered functionals. \\ \\
{\it Keywords}: Brownian motion, local time, mixed normality, semimartingales, stable convergence. 
\\ \\
{\it AMS 2000 subject classifications}: 60F05,60G44,60H05.

\end{abstract}


\section{Introduction} \label{sec0}
\setcounter{equation}{0}
\renewcommand{\theequation}{\thesection.\arabic{equation}}

Over the past five decades, the mathematical literature has witnessed a surge in interest regarding the probabilistic and statistical properties of local times. Originating from the structure of a Hamiltonian in a specific polymer model, numerous investigations have been dedicated to the asymptotic theory concerning functionals derived from the local time of a Brownian motion. A notable body of work in this domain includes \cite{C17,CLMR10,HN09,HN10,MR08,R11}. 
Recall that the local time $(L^x)_{x\in \R}$ of a Brownian motion $(W_t)_{t\in [0,1]}$ over a time interval $[0,1]$ is defined as the almost sure limit
\bee \label{loctime}
L^x:= \lim_{\ep \downarrow 0} \frac{1}{2\ep} \int_0^1 1_{(x-\ep,x+\ep)} (W_s) ds.
\eee  
The primary focus of our paper centers around statistics of the form:
\bee \label{Defvf}
V(f)^h_{\R}:= \int_{\R} f\left(h^{-1/2} (L^{x+h} - L^x)  \right) dx, \qquad h>0,
\eee
where $f:\R \to \R$ is a smooth enough function with $f(0)=0$.
Our objective is to ascertain the asymptotic behavior of the statistic $V(f)^h_{\R}$ as $h\to 0$. The theorem below summarizes several special cases, extensively explored in the existing literature, that fall within the scope of our investigation.

\begin{theo} \label{ThSummary}
Let $Z$ be a standard Gaussian random variable independent of the local time  $(L^x)_{x\in \R}$. 
\begin{itemize}
\item[(i)] Chen et al. \cite{CLMR10}, Hu and Nualart \cite{HN09}, Rosen \cite{R11b}, case $f(x)=x^2$: As $h\to 0$
\[
\frac{1}{h^{3/2}} \left( \int_{\R} \left(L^{x+h} - L^x\right)^2 dx -4h \right) \schw \sqrt{\frac{64}{3}\int_{\R} (L^x)^2 dx} \times Z.
\] 
\item[(ii)] Hu and Nualart \cite{HN10}, Rosen \cite{R11}, case $f(x)=x^3$: As $h\to 0$
\[
\frac{1}{h^{2}} \int_{\R} \left(L^{x+h} - L^x\right)^3 dx  \schw \sqrt{192\int_{\R} (L^x)^3 dx} \times Z.
\] 
\item[(iii)] Campese \cite{C17},  case $f(x)=x^q$ with $q\in \N_{\geq 2}$: As $h\to 0$
\[
\frac{1}{h^{3/2}} \left( \int_{\R} \left(L^{x+h} - L^x\right)^q dx +R_{q,h} \right) \schw c_q \sqrt{ \int_{\R} (L^x)^q dx} \times Z,
\] 
where the random variable $R_{q,h} $ is given by 
\[
R_{q,h}:= \sum_{k=1}^{\lfloor q/2 \rfloor} a_{q,k} \int_{\R} \left( L^{x+h} - L^x\right)^{q-2k} \left(4 \int_x^{x+h} L^u du\right)^k dx,
\]
and the constants $a_{q,k}$ and $c_k$ are defined as
\[
a_{q,k}= \frac{(-1)^k q!}{2^k k! (q-2k)!} \qquad \text{and} \qquad c_q = \sqrt{\frac{2^{2q+1} q!}{q+1}}.
\]
\end{itemize}
\end{theo}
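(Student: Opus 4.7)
Since the statement collects three results from the cited references, the proof proposal reduces to sketching the strategies employed there and identifying the common structural feature. That feature is the following: in each case the statistic $V(f)^h_{\R}$ is first rewritten as a stochastic integral against the underlying Brownian motion, and the limit then emerges from the asymptotic bracket of that integral via a stable martingale central limit theorem of the type developed in \cite{J97,JS03}.

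For (i) and (ii), the plan is to start from Tanaka's formula,
\[
L^{x+h}-L^x = -2 \int_0^1 \bigl(\mathbf{1}_{\{W_s \leq x+h\}} - \mathbf{1}_{\{W_s \leq x\}}\bigr)\, dW_s + (\text{bounded variation terms}),
\]
so that $(L^{x+h}-L^x)^q$ for $q=2,3$ becomes a polynomial in a multiple Wiener--It\^o integral of explicit kernel. In the quadratic case, the constant $4h$ exactly cancels the leading It\^o-isometry contribution; in the cubic case the odd power is automatically centered. The proof then proceeds in three steps: first, show that the centered statistic lives asymptotically inside a single Wiener chaos; second, identify the $L^2$-limit of its conditional variance via the Ray--Knight theorem, which converts the spatial integrals of $(L^x)^q$ into functionals of an independent Gaussian process; and third, invoke either a fourth-moment criterion \`a la Nualart--Peccati or a Malliavin-based martingale CLT (as in \cite{HN09,HN10,R11,R11b}) to upgrade convergence of moments to convergence in law, with a mixed-normal limit arising from the Ray--Knight mixing measure.

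For (iii), I would follow Campese's algebraic approach. The key inputs are Perkins' semimartingale representation of $x \mapsto L^x$, whose bracket is $\langle L^{\cdot+h}-L^{\cdot}\rangle_x = 4 \int_x^{x+h} L^u\, du$, together with the Kailath--Segall formula that expands $Y^q$ for a continuous semimartingale $Y$ as an alternating sum of iterated stochastic integrals against powers of the bracket. Applying the formula to $Y=L^{x+h}-L^x$ produces a leading iterated integral of order $q$, accounting for the Gaussian fluctuation, plus correction terms that match $R_{q,h}$ term by term, the powers of $\int_x^{x+h}L^u\,du$ being exactly the powers of the bracket entering the Kailath--Segall expansion. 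The constants $a_{q,k}$ arise from the Hermite-like combinatorics of this expansion, while the variance constant $c_q^2 = 2^{2q+1}q!/(q+1)$ arises from the limiting bracket of the top-order iterated integral.

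The main obstacle, common to all three items, is two-fold. First, one must identify the precise centering (the constant $-4h$ in (i) and the full polynomial correction $R_{q,h}$ in (iii)) that removes all deterministic bias on scales larger than $h^{3/2}$ or $h^2$ without destroying the limiting Gaussian variance; the combinatorial bookkeeping for general $q$ is delicate and is exactly what the Kailath--Segall formula is designed to handle. Second, one must control the residual non-martingale terms arising from Tanaka's and Perkins' formulae and show that, after integration over $x \in \R$, they are $o_{\PP}(h^{3/2})$. Once these two reductions are carried out, the mixed-normal limit follows from the stable martingale CLT, with the Ray--Knight theorem providing the explicit identification of the mixing random variance $\int_{\R}(L^x)^q\,dx$.
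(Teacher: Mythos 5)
Theorem~\ref{ThSummary} is stated in the paper without proof: it is a survey of results established in \cite{CLMR10,HN09,HN10,R11,R11b,C17}, and the introduction only describes, in a sentence or two per item, which techniques those references use. There is therefore no ``paper's own proof'' to compare against; the appropriate benchmark is the paper's verbal summary and the cited works themselves.

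Against that benchmark your sketch is essentially accurate. For (i) and (ii) you correctly identify the Tanaka representation of $L^{x+h}-L^x$, the reduction to a fixed Wiener chaos after centering, the use of the Ray--Knight theorem to identify the random asymptotic variance $\int_{\R}(L^x)^q\,dx$, and the role of Malliavin-calculus or moment methods to close the argument --- this matches the paper's description (``method of moments in \cite{CLMR10}'', ``Malliavin calculus along with a version of the Ray--Knight theorem'' in \cite{HN09,HN10,R11,R11b}). For (iii) you correctly attribute the mechanism to Perkins' semimartingale representation (so that $\langle L^{\cdot+h}-L^{\cdot}\rangle_x = 4\int_x^{x+h}L^u\,du$) and the Kailath--Segall expansion, which is exactly how the correction $R_{q,h}$ and the constants $a_{q,k}$ arise. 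One small caveat: the standard Nualart--Peccati fourth-moment theorem yields \emph{Gaussian}, not mixed-Gaussian, limits, so it is not directly invocable here; the cited proofs use stable/martingale CLTs (or a conditional moment method), with Ray--Knight supplying the mixing variable. You already hedge toward this, but it is worth being explicit that the unconditional fourth-moment criterion would not suffice.

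It is also worth noting, since it is the paper's actual point, that Theorem~\ref{mainth} of the paper recovers items (i) and (ii) as special cases of a single result for general $f$, via a genuinely different route: the infill semimartingale limit theory of \cite{J97,JS03}, a big-block/small-block decomposition to break the $h$-range dependence, and a direct identification of the conditional variance $v^2_{\si_u}-\si_u^2\rho^2_{\si_u}(f')$, with no Ray--Knight theorem and no Kailath--Segall combinatorics. That route also replaces the $h$-dependent centering $R_{q,h}$ of (iii) by the natural centering $V(f)_{\R}$. If your goal were to \emph{reprove} Theorem~\ref{ThSummary}(i)--(ii) within this paper rather than recount the original arguments, the efficient path is the one taken in Example~2.3: specialize $f(x)=x^2$ and $f(x)=x^3$ in Theorem~\ref{mainth}, compute $\rho_u(f)$, $\rho_u(f')$, $v_x^2$, and read off the constants.
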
  

\noindent The weak convergence established in Theorem \ref{ThSummary}(i) for the quadratic case has been demonstrated via the method of moments in \cite{CLMR10}. On the other hand, in \cite{HN09}, techniques from Malliavin calculus, along with a version of the Ray-Knight theorem, were employed to derive the same result. Similar methodologies were applied in \cite{HN10, R11} to establish the cubic case outlined in Theorem \ref{ThSummary}(ii). The more general outcome of \cite{C17}, as presented in Theorem \ref{ThSummary}(iii), employs a distinct technique to establish asymptotic mixed normality. The starting point in \cite{C17} is the semimartingale representation of the local time $(L^x)_{x\in \R}$, initially proven by Perkins in \cite{P82}. This representation, in turn, implies a semimartingale decomposition of the statistic $\int_{\R} \left(L^{x+h} - L^x\right)^q dx$. The somewhat intricate standardization $R_{q,h}$ is derived from the Kailath-Segall formula \cite{SK76}, ensuring that the normalized object is a martingale. In the final step, the asymptotic Ray-Knight theorem is applied to deduce weak convergence.

It is worth noting that Theorem \ref{ThSummary}(iii) extends the results of Theorem \ref{ThSummary}(i) and (ii) due to $R_{2,h}=-4h$ and $R_{3,h}=0$ (cf. \cite{C17}). However, in other cases, the standardization $R_{q,h}$ is somewhat unnatural as it depends on the parameter $h$. Our main result, presented below, not only extends Theorem \ref{ThSummary} to general functions but also employs a much more natural standardization. Unless stated otherwise, all random variables are defined on a given probability space $(\Omega, \mathcal{F},\PP)$.

\begin{theo} \label{mainth}
Let $f\in C(\R)$ be a function with polynomial growth satisfying $f(0)=0$. Define the quantity 
\bee \label{defrho}
\rho_{u}(f):= \E[f(\n(0,u^2))] \qquad \text{for } u\in \R. 
\eee
Then it holds that 
\bee \label{LawLN}
V(f)^h_{\R} \toop V(f)_{\R}:= \int_{\R} \rho_{\si_u}(f) du \qquad \text{where} \qquad \si_u:= 2\sqrt{L^u}
\eee
as $h\to 0$.
If moreover  $f\in C^1(\R)$ and $f,f'$ have polynomial growth we deduce the stable convergence
\bee \label{stabresult}
U(f)^h_{\R}:= h^{-1/2} \left( V(f)^h_{\R} - V(f)_{\R}\right) \stab U(f)_{\R}:= \int_{\R} \sqrt{v^2_{\si_u} - \si^2_u \rho^2_{\si_u}(f')} ~dW'_u,
\eee
where $W'$ is a Brownian motion defined on an extended probability space and independent of $\mathcal F$. The quantity $v_x$ is defined as
\bee \label{vxdef}
v_x^2:= 2 \int_0^1 \text{\rm cov}\left(f(xB_1), f(x(B_{s+1}-B_s)) \right) ds
\eee
with $B$ being a standard Brownian motion.
\end{theo}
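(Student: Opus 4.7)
The proof rests on Perkins' classical result \cite{P82} that $x\mapsto L^x$ is a semimartingale in a suitable filtration $(\mathcal{G}_x)_{x\in\R}$, with decomposition
\[
L^x = L^0 + N_x + A_x, \qquad d\langle N\rangle_x = \sigma_x^2 \, dx, \qquad A \text{ of bounded variation}.
\]
Conditionally on $\mathcal{G}_x$, the increment $h^{-1/2}(L^{x+h}-L^x)$ is therefore asymptotically $\n(0,\sigma_x^2)$ as $h\to 0$, which places the problem squarely within the infill limit theory for functionals of semimartingale increments of \cite{J97,JS03}. After a standard localization that reduces to bounded coefficients and to $L$ with compact spatial support (exploiting the a.s.\ boundedness of the range of $W$ on $[0,1]$), the proof proceeds in two main steps.

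\emph{Law of large numbers.} The conditional Gaussian approximation gives $\E[f(h^{-1/2}(L^{x+h}-L^x))\mid\mathcal{G}_x] = \rho_{\sigma_x}(f) + O(\sqrt h)$ pointwise in $x$. An $L^2$ bound on the residual $f(h^{-1/2}(L^{x+h}-L^x)) - \E[\,\cdot\mid\mathcal{G}_x]$, combined with the vanishing of cross-covariances outside the overlap window $|x-y|\leq h$, yields (\ref{LawLN}) as a spatial Riemann-sum approximation of $\int_{\R}\rho_{\sigma_u}(f)\,du$.

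\emph{Stable CLT and overlap variance.} To prove (\ref{stabresult}), parametrize $x=jh+t$ with $j\in\Z$, $t\in[0,h)$, yielding
\[
V(f)^h_{\R} = \int_0^h \sum_{j\in\Z} f\!\left(h^{-1/2}(L^{(j+1)h+t} - L^{jh+t})\right)\,dt.
\]
For each fixed $t$, the inner sum is a functional of non-overlapping increments of the Perkins semimartingale, to which Jacod's stable martingale CLT in $(\mathcal{G}_x)$ applies. Pushing the outer integral in $t$ through the limit produces the overlap structure: writing $Y^h_x := h^{-1/2}(L^{x+h}-L^x)$, the pairs $(Y^h_x, Y^h_{x+uh})$ for $u\in(-1,1)$ converge jointly, after factoring out $\sigma_x$, to $(B_1, B_{|u|+1} - B_{|u|})$. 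Integrating the resulting covariance over $u\in(-1,1)$ gives the overlap variance $v_{\sigma_u}^2$, the factor $2$ in (\ref{vxdef}) arising from the $|u|$-symmetry.

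\emph{Drift correction and conclusion.} The subtraction $-\sigma_u^2\rho_{\sigma_u}^2(f')$ originates from the bounded-variation part $A$ of Perkins' decomposition: a first-order Taylor expansion of $f(h^{-1/2}(L^{x+h}-L^x))$ around its pure-martingale proxy $f(h^{-1/2}(N_{x+h}-N_x))$ produces an $f'$-term whose integrated covariance with the leading martingale fluctuation contributes $-\sigma_u^2\rho_{\sigma_u}^2(f')$ to the asymptotic variance; nonnegativity of $v_{\sigma_u}^2 - \sigma_u^2\rho_{\sigma_u}^2(f')$ follows from Cauchy--Schwarz in the conditional Gaussian model. Invoking Jacod's stable martingale CLT \cite{JS03} then yields (\ref{stabresult}), the $\mathcal F$-orthogonal randomness being carried by the independent Brownian motion $W'$ on an extended probability space. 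The main obstacle is the simultaneous handling of the overlap-driven variance $v_{\sigma_u}^2$ and the drift-driven correction: their joint emergence from the interplay between Perkins' decomposition and the overlapping integral requires careful bookkeeping of cross terms. A secondary technical point is the localization argument that extends the result from bounded $f$ to $f$ with polynomial growth, which relies on moment estimates for the Perkins martingale part $N$.
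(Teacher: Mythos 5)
Your overall architecture (Perkins' semimartingale representation, localization, an LLN via conditional Gaussian approximation, and Jacod's stable CLT) is aligned with the paper, and the $x=jh+t$ parametrization is a legitimate alternative to the paper's big-block/small-block decomposition for generating the overlap variance $v_{\sigma_u}^2$. However, there is a genuine gap in the step that should explain the form of the limit.

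The subtraction $\sigma_u^2\rho_{\sigma_u}^2(f')$ does \emph{not} originate from the bounded-variation part $A$. In Jacod's stable CLT the conditional variance $v_{\sigma_x}^2$ splits as $w_{\sigma_x}^2+(v_{\sigma_x}^2-w_{\sigma_x}^2)$, where $w_{\sigma_x}=\sigma_x\rho_{\sigma_x}(f')$ is the asymptotic conditional covariance of the normalized functional with the increments of the \emph{driving} Brownian motion $B$ from Perkins' decomposition (condition of the type $\E[X_i^h\,\Delta B_i\mid\mathcal F_{t_i}]\to w_{\sigma_x}$). The part $w^2$ is carried by an $\mathcal F$-measurable stochastic integral $\int w_{\sigma_x}\,dB_x$, and only the orthogonal remainder $v^2-w^2$ goes to the independent $W'$. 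So on any finite interval the correct functional limit is a \emph{three-term} object
\[
\int_{I_t} r_{a_x,\sigma_x}\,dx \;+\; \int_{I_t} w_{\sigma_x}\,dB_x \;+\; \int_{I_t}\sqrt{v_{\sigma_x}^2-w_{\sigma_x}^2}\,dW'_x,
\]
and the drift $A$ enters through the first term $r$, not through the variance. Your Taylor-expansion story mixes the $\mathcal F$-conditional bias (a drift in the limit) with the variance reduction (a conditional covariance with $B$); these are different objects.

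The more serious omission is that your proposal gives no mechanism for the $\mathcal F$-conditional bias to disappear in the final statement. Precisely because $f$ is not assumed even, one expects a limit with a nontrivial $\mathcal F$-conditional drift, and the clean pure-$W'$ form in \eqref{stabresult} is exactly the surprising content of the theorem. The paper's route is: prove the three-term functional limit on $[-T,T]$, then show (via an It\^o-formula computation with $G(u)=\int_0^u\rho_{2\sqrt{x}}(f')\,dx$) that the first two terms collapse to the telescoping quantity $G(L^t)-G(L^0)$, and finally use $L^{\underline S}=L^{\overline S}=0$ to conclude that this telescope vanishes once one sums the contributions over $(\underline S,0)$ and $(0,\overline S)$. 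Without an argument of this type, applying Jacod's CLT to your overlapping decomposition would leave you with residual $\int r\,dx$ and $\int w\,dB$ terms that do not cancel and that your announced limit does not contain. A secondary gap is the reduction from smooth $f$ (needed for the It\^o computation just described) to $f\in C^1$; you mention a localization but not the approximation-by-smooth-functions step, which requires controlling all three limit quantities $r,w,v$ uniformly.
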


\noindent Building on the insights presented in \cite{C17}, our approach begins by leveraging the semimartingale representation of the local time. This transformation allows us to recast the original problem into an asymptotic statistic of a semimartingale. Employing a series of approximation techniques from stochastic analysis, we then apply the limit theory for high-frequency observations of semimartingales, as established in \cite{J97}. This application yields the stable convergence result expressed in \eqref{stabresult}. However, it's important to highlight that our framework diverges from classical results established in works such as \cite{BGJPS06,JP12,KP08,PV10} in several aspects. Firstly, we need to introduce a blocking technique as a necessity to break the correlation in the statistic $V(f)^h_{\R}$. Another notable departure from standard high-frequency theory lies in the assumption regarding the semimartingale property of the diffusion coefficient. This property, crucial for obtaining the necessary smoothness for a stable limit theorem, is absent in our model. Instead, our diffusion coefficient is represented by the process $\si_u$ defined in \eqref{LawLN}, which is not a semimartingale. Consequently, we employ more nuanced techniques to derive the asymptotic theory.

A surprising distinction, in comparison to \cite{KP08}, is observed in the form of the limit at \eqref{stabresult}. Generally, when the function $f$ is not even, the limit typically comprises three terms, revealing an $\mathcal{F}$-conditional bias (cf. \cite{J97,KP08} and Theorem \ref{Thfunc} below). However, in our scenario, we obtain a simpler limit denoted as $U(f)_{\R}$, devoid of an $\mathcal{F}$-conditional bias. This holds true regardless of whether the function $f$ is even or not.

The paper is structured as follows. Section \ref{sec1} provides crucial technical results, including the semimartingale decomposition of the local time $(L^x)_{x\in \R}$ and a functional stable central limit theorem. In Section 3, we delve into the proof of the main result.

\subsection*{Notation}
Unless explicitly stated otherwise, all random variables and stochastic processes are defined on a filtered probability space denoted by $\proba$. All positive constants
are denoted by $C$ (or by $C_p$ if we want to emphasise the dependence on an external parameter $p$) although they may change from line to line. We use the notation
\[
I_t:= [\min(0,t),\max(0,t) ].
\]
We say that a function $f:\R \to \R$ has polynomial growth if it holds that $|f(x)|\leq C(1+|x|^p)$ for some $p\geq0$. We denote by $\langle X,Y \rangle$ the covariation process of two semimartingales $X$ and $Y$.
For real-valued stochastic processes $Y^n$ and $Y$, we employ the notation $Y^n \ucp Y$ to signify uniform convergence in probability, specifically:
\[
\sup_{t\in A} \left|Y^n_t - Y_t \right| \toop 0
\]
for any compact set $A\subset \R$. For a sequence of random variables $(Y^n)_{n\in \N}$ defined on a Polish space $(E,\mathcal{E})$, we say that $Y^n$ converges stably in law towards $Y$ 
($Y^n \stab Y$), which lives on an extension $\probp$ of the original probability space $\prob$,
if and only if
\[
\lim_{n\to \infty} \E[Fg(Y^n)]= \E'[Fg(Y)]
\] 
for all bounded $\mathcal{F}$-measurable random variables $F$ and all bounded continuous functions $g:E\to \R$.

\section{Definitions and preliminary results} \label{sec1}
\setcounter{equation}{0}
\renewcommand{\theequation}{\thesection.\arabic{equation}}
To begin, we utilize the semimartingale representation of the local time process $(L^x)_{x\in\R}$, as derived in \cite{P82}.
This representation posits the existence of a Brownian motion $(B_t)_{t\in \R}$ such that the local time  is expressed as follows:
\begin{align} \label{sem}
L^x = L^z + \int_z^x \sigma_y dB_y +  \int_z^x a_y dy, \qquad x\geq z,
\end{align} 
where the diffusion coefficient $\si$ is defined at \eqref{LawLN}, and the drift coefficient $a$ is a predictable, locally bounded process. This representation, as emphasized in the introduction, serves as a fundamental tool for establishing the stable limit theorem in \eqref{stabresult}. Additionally, we introduce two random times
\begin{align} \label{localset}
\underline{S} := \inf\left\{a\leq 0:~ L^a>0 \right\}, \qquad  
\overline{S} := \sup\left\{a\geq 0:~ L^a>0 \right\},
\end{align} 
and remark that $\underline{S}$ and $\overline{S}$ are stopping times with respect to the filtration generated by $L$. Note $L^x=0$ for any $x \not \in [\bS,\BS]$. 

To establish the results outlined in Theorem \ref{mainth}, it is essential to introduce a functional version of the statistic $V(f)^h_{\R}$. For a fixed $T>0$, we define this functional as follows:
\bee \label{funcV}
V(f)^h_t:= \int_{I_t} f\left(h^{-1/2} (L^{x+h} - L^x)  \right) dx,  \qquad t\in[-T,T]. 
\eee
We obtain the following theorem.

\begin{theo} \label{Thfunc}
Let $f\in C(\R)$ be a function with polynomial growth satisfying $f(0)=0$. Then it holds that 
\bee \label{ucpconver}
V(f)^h \ucp V(f) \text{ as } h\to 0 \qquad \text{where} \qquad  V(f)_t:=  \int_{I_t}  \rho_{\si_u}(f) du.
\eee
Assume moreover that $f\in C^1(\R)$ and $f,f'$ have polynomial growth, and define the process 
$$U(f)^h_t:=h^{-1/2} \left( V(f)^h_t - V(f)_t\right) .$$ 
Then, as $h\to 0$, we obtain the functional stable convergence $U(f)^h \stab U(f)$ on $(C([-T,T]), \|\cdot\|_{\infty})$, where 
\bee
   U(f)_t:= \int_{I_t}  r_{a_x,\si_x} dx + \int_{I_t} w_{\si_x} dB_x + \int_{I_t}  \sqrt{v_{\si_x}^2 - w_{\si_x}^2 } dW'_x. 
\eee
The processes $v_u$ and $W'$ have been introduced in Theorem \ref{mainth}, while the quantities $w_u$ and $r_{u_1,u_2}$ are defined as
\bea \label{quantwr} 
w_u&:=& u\rho_u(f'),  \\[1.5 ex]
r_{u_1,u_2}&:=& u_1 \rho_{u_2}(f') +  \int_0^1 \E\left[f'(u_2(B_{x+1}-B_x)) (B_2^2-2) \right] dx. \nonumber
\eea
\end{theo}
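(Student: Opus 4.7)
The strategy rests on the Perkins semimartingale representation \eqref{sem}, by which
\[
L^{x+h}-L^x \;=\; \int_x^{x+h}\si_y\, dB_y \;+\; \int_x^{x+h} a_y\, dy,
\]
so that $V(f)^h_t$ becomes an infill-type functional of the semimartingale $x\mapsto L^x$, with the spatial variable $x$ playing the role of time. After a standard localization reducing to the case where $\si$ and $a$ are bounded, the argument would proceed in three stages.

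\emph{Law of large numbers.} I would replace $L^{x+h}-L^x$ by $\si_x(B_{x+h}-B_x)$, controlling the $L^2$-error via the Ito isometry together with the $L^2$-continuity of $y\mapsto\si_y$. Introduce a blocking scheme with blocks of length $h'=k\,h$ where $k\to\infty$ and $h'\to 0$; on each block $\si_y$ is essentially frozen at its left endpoint, and Brownian scaling casts the block contribution as an integral of $f(\si_{ih'}(B_{r+1}-B_r))$ against $dr$, to which a standard ergodic-type argument for functionals of Brownian increments (in the spirit of the realized-variance LLN) applies. Summing over blocks and using uniform continuity of $x\mapsto\rho_{\si_x}(f)$ produces the pointwise limit $V(f)_t$, and an $L^p$-modulus estimate on $V(f)^h_t-V(f)^h_s$ furnishes the tightness required to upgrade to ucp convergence.

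\emph{Stable central limit theorem.} A first-order expansion gives
\[
h^{-1/2}(L^{x+h}-L^x) \;=\; \si_x h^{-1/2}(B_{x+h}-B_x) + h^{-1/2}\int_x^{x+h}(\si_y-\si_x)\, dB_y + h^{1/2} a_x + o_P(h^{1/2}),
\]
and the drift $h^{1/2}a_x$ produces, after composition with $f$ and cancellation against $\rho_{\si_x}(f)$, the contribution $\int_{I_t} a_x\rho_{\si_x}(f')\,dx$. Expanding $\si_y-\si_x$ through Ito's formula applied to the genuine semimartingale $\si^2=4L$ and substituting it into the middle stochastic integral yields terms of quadratic-variation type $(B_{x+h}-B_x)^2-h$ that, upon normalization, generate the second summand of $r_{u_1,u_2}$. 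The leading centered term
\[
h^{-1/2}\int_{I_t}\bigl[f(\si_x h^{-1/2}(B_{x+h}-B_x))-\rho_{\si_x}(f)\bigr]\,dx
\]
is handled, inside each block, by projecting onto the first Wiener chaos generated by $B$; this projection yields the stochastic integral $\int_{I_t} w_{\si_x}\,dB_x$ with $w_u=u\rho_u(f')$ (Stein/Gaussian integration by parts), while the $\mathcal{F}$-conditionally orthogonal remainder is a martingale whose predictable quadratic variation converges to $\int_{I_t}(v_{\si_x}^2-w_{\si_x}^2)\,dx$ -- the quantity $v_{\si_x}^2$ in \eqref{vxdef} arising naturally from the covariance of neighboring rescaled increments integrated over the overlap parameter $s\in[0,1]$. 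Jacod's functional stable CLT \cite{J97}, applied to this orthogonal part, then delivers $\int_{I_t}\sqrt{v_{\si_x}^2-w_{\si_x}^2}\,dW'_x$ with $W'$ a Brownian motion independent of $\mathcal{F}$.

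\emph{Main obstacle.} The principal difficulty is that $\si=2\sqrt{L}$ is not a semimartingale on the zero set of $L$, which blocks a direct application of the classical high-frequency stable CLTs of \cite{BGJPS06,JP12,KP08}. I plan to circumvent this in three ways: (i) by performing all Ito-based expansions on the semimartingale $\si^2=4L$ rather than on $\si$ itself; (ii) by truncating to $\{\si_x>\epsilon\}$ and controlling the contribution of $\{\si_x\le\epsilon\}$ in $L^1$ using the polynomial growth of $\rho_{\si_\cdot}(f)$, $w_{\si_\cdot}$ and $v_{\si_\cdot}^2$ together with standard Brownian occupation-time estimates, then letting $\epsilon\to 0$; (iii) by tuning the block parameter $k$ so that $\si$ remains essentially constant across a block, confining the delicate semimartingale manipulations to the coarse inter-block scale while only standard Brownian increments appear on the fine intra-block scale.
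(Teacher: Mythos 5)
Your overall plan --- Perkins' representation, localization, freezing $\si$ on blocks, and a Jacod-type functional stable CLT in which the first-chaos projection produces $\int w_{\si_x}\,dB_x$ and the orthogonal remainder produces $\int\sqrt{v_{\si_x}^2-w_{\si_x}^2}\,dW'_x$ --- is essentially the paper's. (The paper takes a sequential double limit: block parameter $m$ fixed, $h\to 0$, then $m\to\infty$ via Lemma~\ref{Lemblock}, with buffer small blocks of length $h$ decorrelating consecutive big blocks; your $k(h)\to\infty$, $k(h)h\to 0$ is a stylistic variant of this.)

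The genuine gap is your step (ii) for the zero set of $L$. Truncating on $\{\si_x>\epsilon\}$ with $\epsilon$ fixed and letting $\epsilon\to 0$ only after $h\to 0$ does not close the argument. The Riemann error for the drift term, schematically $h^{-1/2}\sum_i\int_{A_i}\big(\rho_{\si_x}(f)-\rho_{\si_{t_i}}(f)\big)\,dx$, has $L^1$-size of order $h^{-1/4}$ because $\E|\si_x-\si_{t_i}|\le C h^{1/4}$ by \eqref{sigmaincr2}, and truncating at a fixed $\epsilon$ gains only an $h$-independent factor, which cannot defeat $h^{-1/4}$. The paper instead uses an $h$-dependent cutoff $\varepsilon_h=h^r$ with $r\in(1/4,1/2)$, splitting according to whether $L^{t_i}\ge\varepsilon_h$ or not. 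On the low set the key estimate is Proposition~\ref{prop1}, $\sup_x\PP(x\in(\bS,\BS),\,L^x<\varepsilon)\le C\varepsilon$, which via H\"older turns the error into $h^{-1/4}\varepsilon_h^{1/q}\to 0$; on the high set Borel--Cantelli forces $L^x\ge\varepsilon_h/2$ uniformly on the block for small $h$, taming the $\tilde a_u(L^u)^{-1/2}$ singularity in the semimartingale expansion \eqref{semsigma} of $\si$ and giving an error $h^{1/2}\varepsilon_h^{-1/2}\to 0$. The window $r\in(1/4,1/2)$ is exactly where both bounds vanish simultaneously, and no fixed $\epsilon$ lies in it. Your idea (i) --- doing It\^o on $\si^2=4L$ --- does not sidestep this: to compare $\rho_{\si_x}(f)$ with $\rho_{\si_{t_i}}(f)$ you must still compose with $\sqrt{\cdot}$, which fails to be Lipschitz at $0$ unless $f$ has three derivatives of polynomial growth (the stronger hypothesis of Proposition~\ref{Prop2.2}), whereas Theorem~\ref{Thfunc} assumes only $f\in C^1$. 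Without the $h$-dependent threshold and the occupation-time bound of Proposition~\ref{prop1}, the argument does not close.
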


\noindent We now demonstrate that the consistency statement in \eqref{LawLN}, as mentioned in Theorem \ref{mainth}, follows from the more general results provided in Theorem \ref{Thfunc}. Initially, we observe the identities
\bee  \label{identiV}
V(f)^h_{\R}= V(f)^h_{\bS} + V(f)^h_{\BS} + O_{\PP}(h), \qquad V(f)_{\R}= V(f)_{\bS} + V(f)_{\BS} 
\eee
hold. For any $\ep>0$, we conclude that 
\begin{align} 
\PP\left(|V(f)^h_{\bS} - V(f)_{\bS}|>\ep\right) &\leq \PP\left(\sup_{t\in [-T,T]}|V(f)^h_t - V(f)_t|>\ep, |\bS| \leq T   \right) \nonumber \\[1.5 ex] 
& + \PP\left(|\bS| > T\right), \label{Sbounded}
\end{align}
and a similar estimate holds for the probability $\PP(|V(f)^h_{\BS} - V(f)_{\BS}|>\ep)$.
Consequently, the uniform convergence in \eqref{ucpconver} implies the statement in \eqref{LawLN} when we choose $T$ to be sufficiently large and then $h$ to be sufficiently small. This establishes the consistency result in the context of Theorem \ref{mainth}.

Subject to an additional smoothness condition on the function $f$, the expression for the limit $U(f)_t$ simplifies as demonstrated in the following proposition.

\begin{prop} \label{Prop2.2}
Assume that $f(0)=0$,  $f\in C^3(\R)$, and $f$ and its first three derivatives have polynomial growth. Define the function
\bee \label{defG}
 G(u):= \int_0^u \rho_{2\sqrt{x}} (f') dx, \qquad u\geq 0.  
\eee
Then we obtain the identity 
\bee
U(f)_t=G(L^t) - G(L^0) + \int_{I_t} \sqrt{v_{\si_x}^2 - w_{\si_x}^2 } dW'_x.
\eee
\end{prop}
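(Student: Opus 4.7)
The plan is to apply Itô's formula to $G(L^x)$ along the Perkins semimartingale representation \eqref{sem} and to identify the resulting drift and martingale parts with those appearing in $U(f)_t$ in Theorem \ref{Thfunc}. Under the hypothesis $f\in C^3(\R)$ with polynomial growth of $f,f',f'',f'''$, differentiation under the expectation in \eqref{defG} is justified by dominated convergence, so that $G\in C^2((0,\infty))$ and, for $u>0$,
\[
G'(u) = \rho_{2\sqrt{u}}(f'), \qquad G''(u) = 2\, \rho_{2\sqrt{u}}(f''').
\]
The first identity is immediate; for the second I set $\psi(v):=\E[f'(\sqrt{v} Z)]$ with $Z\sim \n(0,1)$, so that $G'(u)=\psi(4u)$, and apply the Gaussian heat-equation relation $\psi'(v)=\frac{1}{2}\E[f'''(\sqrt{v} Z)]$ at $v=4u$ to obtain $G''(u)=4\psi'(4u)=2\rho_{2\sqrt{u}}(f''')$.

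For $t\geq 0$, Itô's formula applied on $[0,t]$ with $dL^x = \si_x\, dB_x + a_x\, dx$ and $d\langle L\rangle_x = \si_x^2\, dx$ gives
\[
G(L^t) - G(L^0) = \int_0^t \si_x\rho_{\si_x}(f')\, dB_x + \int_0^t a_x \rho_{\si_x}(f')\, dx + \int_0^t \si_x^2 \rho_{\si_x}(f''')\, dx.
\]
The martingale integrand coincides with $w_{\si_x}$ by \eqref{quantwr}, so that term matches $\int_{I_t} w_{\si_x}\, dB_x$ verbatim. Comparing the drift with the expression for $r_{a_x,\si_x}$ in \eqref{quantwr}, the proposition reduces to the deterministic Gaussian claim
\[
\si^2 \rho_{\si}(f''') = \int_0^1 \E\left[f'\bigl(\si(B_{s+1}-B_s)\bigr)(B_2^2 - 2)\right] ds, \qquad \si >0.
\]

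The main obstacle of the proof is establishing this Gaussian identity. For fixed $s\in[0,1]$ I would decompose $B_2 = X_s + Y_s$ with $X_s := B_{s+1}-B_s$ and $Y_s := B_2 - X_s$. A direct calculation gives $\text{cov}(Y_s,X_s)=\text{cov}(B_2,X_s)-\text{var}(X_s)=1-1=0$ and $\text{var}(Y_s)=\text{var}(B_2)-2\,\text{cov}(B_2,X_s)+\text{var}(X_s)=2-2+1=1$, hence $Y_s$ is independent of $X_s$ and $\n(0,1)$-distributed. Expanding $B_2^2 - 2 = X_s^2 + 2 X_s Y_s + (Y_s^2 - 1) - 1$, the cross term and the $(Y_s^2-1)$ contribution vanish in expectation by independence and the moments of $Y_s$, leaving the $s$-independent quantity $\E[f'(\si X_s)(X_s^2-1)]$. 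Two applications of Stein's lemma, justified by the polynomial growth of $f''$ and $f'''$, give $\E[f'(\si X_s)X_s^2]=\si^2\rho_\si(f''')+\rho_\si(f')$, so that $\E[f'(\si X_s)(X_s^2-1)]=\si^2\rho_\si(f''')$, which is the desired identity upon integration in $s$.

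The remaining steps are routine: the $t\leq 0$ case is handled analogously by applying \eqref{sem} with $z=t,\, x=0$ and matching signs against the interval convention defining $I_t$; dominated convergence and the polynomial growth of the integrands make both the differentiation under the expectation in $G$ and Stein's integration by parts applicable; the stochastic integrals are well-defined on $[-T,T]$ using local boundedness of $a$ together with the fact that $L^x$ vanishes outside $[\bS,\BS]$.
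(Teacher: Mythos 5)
Your proof is correct and follows essentially the same route as the paper: apply Itô's formula to $G(L)$ using Perkins' decomposition, observe that the martingale part equals $\int w_{\si_x}\,dB_x$, and reduce the drift comparison to the Gaussian identity $\int_0^1 \E[f'(u(B_{s+1}-B_s))(B_2^2-2)]\,ds = u^2\rho_u(f''')$, which the paper dispatches as ``a twofold application of an integration by parts formula.'' Where you add value is in spelling out that identity: the orthogonal decomposition $B_2 = X_s + Y_s$ with $X_s = B_{s+1}-B_s$ and $Y_s\perp X_s$, $Y_s\sim\n(0,1)$, reduces it to the one-variable computation $\E[f'(\si X)(X^2-1)] = \si^2\rho_\si(f''')$ via two applications of Stein's lemma, whereas the paper would reach the same result by a bivariate Gaussian integration by parts on the correlated pair $(B_{s+1}-B_s, B_2)$ directly; the two computations are equivalent, and yours is the more elementary and self-contained.
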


\begin{proof}
First of all, we note that $\rho_u(g)<\infty$ when the function $g$ has polynomial growth. Observing the semimartingale decomposition at \eqref{sem}, an application of It\^o formula gives
\[
F(L^{b}) = F(L^a) + \int_a^b F'(L^u) dL^u + 2  \int_a^b F''(L^u) L^u du,
\]
for any $F\in C^2(\R)$ and any $b>a$. A twofold application of an integration by parts formula implies that 
\[
 \int_0^1 \E\left[f'(u_2(B_{x+1}-B_x)) (B_2^2-2) \right] dx =  u_2^2 \rho_{u_2}(f'''). 
\]
According to definition \eqref{defG} it holds that
\[
G'(u) = \rho_{2\sqrt{u}} (f') \qquad \text{and} \qquad G''(u)=  2\rho_{2\sqrt{u}} (f''').
\]
Consequently, we deduce the identity 
\begin{align*}
\int_{I_t}  r_{a_x,\si_x} dx + \int_{I_t} w_{\si_x} dB_x  &= \int_{I_t}  \rho_{\si_u}(f') dL^u + \int_{I_t}  \si_u^2 \rho_{\si_u}(f''') du \\[1.5 ex]
&= \int_{I_t}  G'(\si^2_u/4) dL^u + \frac{1}{2}\int_{I_t}  \si^2_u G''(\si_u^2/4)du \\[1.5 ex]
&=  \int_{I_t}  G'(L^u) dL^u + 2\int_{I_t}  L^2_u G''(L^u)du = G(L^t) - G(L^0).
\end{align*}
This completes the proof of the proposition. 
\end{proof}

\noindent As a consequence of Proposition \ref{Prop2.2} and the identity $L^x=0$ for $x\not \in [\bS,\BS]$, we infer that 
\bee \label{reducedlimit}
  U(f)_{\R} =  U(f)_{\bS} + U(f)_{\BS} =  \int_{\R} \sqrt{v_{\si_x}^2 - w_{\si_x}^2 } dW'_x,
\eee 
provided the function $f$ satisfies the conditions outlined in Proposition \ref{Prop2.2}. Therefore, the stable convergence asserted in Theorem \ref{mainth} follows from Proposition \ref{Prop2.2} when accompanied by a suitable approximation argument. The details of this argument will be explained in Section \ref{lastsect}.

\begin{ex} \rm
Here, we illustrate that Theorem \ref{mainth} includes the results of Theorem \ref{ThSummary}(i) and (ii) as specific cases. \\ \\
(i) Consider the quadratic case $f(x)=x^2$ and note the identities $\rho_u(f)=u^2$, 
$\rho_u(f')=0$. We immediately conclude that
\[
V(f)_{\R}= \int_{\R} \si_u^2 du = 4 \int_{\R} L^u du =4,
\]
where the last equality follows from the occupation time formula. We also deduce that 
\[
v_x^2 = 2x^4 \int_0^1 \text{cov} \left( B_1^2, (B_{s+1}-B_s)^2\right) ds
= 4x^4 \int_0^1 \text{cov} \left( B_1, B_{s+1}-B_s\right)^2 ds = \frac{4}{3} x^4,
\]
and consequently we get $v^2_{\si_u} = \frac{64}{3} (L^u)^2$. Thus we recover the statement 
of Theorem \ref{ThSummary}(i). \\ \\
(ii) Now, consider the cubic setting $f(x)=x^3$. In this scenario we deduce the identities  
$\rho_u(f)=0$ and $\rho_u(f')=3u^2$. Consequently, $V(f)_{\R}=0$ and $w_u^2= 9 u^6$. A straightforward computation shows that 
\[
v_x^2= 2 \int_0^1 \text{\rm cov}\left(f(xB_1), f(x(B_{s+1}-B_s)) \right) ds =12 x^6. 
\]
Thus we deduce the identity $v^2_{\si_u} - w_{\si_u}^2 =192 (L^u)^3$ and we recover  the statement 
of Theorem \ref{ThSummary}(ii).  \qed
\end{ex}

\section{Proofs} \label{sec2}
\setcounter{equation}{0}
\renewcommand{\theequation}{\thesection.\arabic{equation}}

\subsection{Preliminary results} \label{sec2.1}
To simplify our analysis, we begin by establishing stronger assumptions on the involved stochastic processes. Analogous to the reasoning provided in \eqref{Sbounded}, we can perform all proofs on the set $\{\bS,\BS \in[-T,T]\}$ for some $T>0$. Provided $L^z>0$ for all $z\in[y,x]$, we will also use the 
identity
\begin{align} \label{semsigma}
\sigma_z = \sigma_y + 2(B_z-B_y) + \int_y^z \tilde{a}_u\cdot (L^u)^{-1/2} dy,
\end{align}
where $\tilde{a}$ is a locally bounded process. This identity follows from $\sigma_{x}=2\sqrt{L^x}$
and an application of the It\^o formula to  \eqref{sem}.

Additionally, given that $\tilde{a}, a$ and $\sigma$  are locally bounded processes, we may, without loss of generality, assume that
\begin{align*}
\sup_{\omega \in \Omega,~ x\in [-T,T]} \left(|\tilde{a}_x(\omega)|+|a_x(\omega)| + |\sigma_x(\omega)| \right) \leq C
\end{align*}  
by employing a standard localization argument (cf. \cite{BGJPS06}).

Due to the boundedness of coefficients $a$ and $\sigma$ we deduce from Burkholder inequality 
for any $a<b$ and $p>0$:
\begin{align} \label{lincr}
\E\left[\sup_{x\in [a,b]} |L^x -L^a|^p\right] \leq C_p |b-a|^{p/2}.
\end{align} 
Consequently, due to definition in  \eqref{LawLN}, we also deduce the inequality
\begin{align} \label{sigmaincr2}
\E\left[\sup_{x\in [a,b]}  |\sigma_x -\sigma_a|^p \right] \leq C_p  |b-a|^{p/4}.
\end{align} 
Furthermore, for any function $g:\R \to \R$ with polynomial growth we have that
\bee \label{ginequ}
\E\left[  g\left(h^{-1/2} (L^{x+h} - L^x) \right) \right] \leq C,
\eee
which follows directly from \eqref{lincr}.

We will often use the following lemmata, which are well known results.
\begin{lem} \label{Lemmartin}
Consider the process $Y_t^n= \sum_{i=1}^{\lfloor nt \rfloor} \chi_i^n$, $t\in [0,T]$, where the random variables are $\chi_i^n$ are $\f_{i/n}$-measurable and square integrable. Assume that 
\[
\sum_{i=1}^{\lfloor nt \rfloor} \E\left[\chi_i^n| \f_{(i-1)/n}\right] \ucp Y_t 
\qquad \text{and} \qquad \sum_{i=1}^{\lfloor nT \rfloor} \E\left[(\chi_i^n)^2| \f_{(i-1)/n}\right] \toop 0.
\]  
Then $Y^n \ucp Y$ as $n\to \infty$.
\end{lem}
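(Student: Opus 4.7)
The plan is to split each $\chi_i^n$ into its $\f_{(i-1)/n}$-conditional mean and a martingale increment. Setting $\mu_i^n := \E[\chi_i^n \mid \f_{(i-1)/n}]$ and $\xi_i^n := \chi_i^n - \mu_i^n$, one obtains the Doob decomposition $Y^n_t = A^n_t + M^n_t$ with
\[
A^n_t := \sum_{i=1}^{\lfloor nt\rfloor} \mu_i^n, \qquad M^n_t := \sum_{i=1}^{\lfloor nt\rfloor} \xi_i^n.
\]
The first hypothesis is literally $A^n \ucp Y$, so the task reduces to showing $M^n \ucp 0$.

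The process $(M^n_{k/n})_{k\ge 0}$ is a square-integrable martingale with respect to the discrete filtration $(\f_{k/n})_{k\ge 0}$, whose predictable quadratic variation satisfies
\[
\langle M^n \rangle_t = \sum_{i=1}^{\lfloor nt\rfloor} \E\bigl[(\xi_i^n)^2 \bigm| \f_{(i-1)/n}\bigr] \leq \sum_{i=1}^{\lfloor nt\rfloor} \E\bigl[(\chi_i^n)^2 \bigm| \f_{(i-1)/n}\bigr],
\]
since conditional variance is dominated by the conditional second moment. The second hypothesis therefore gives $\langle M^n\rangle_T \toop 0$. If this convergence held in $L^1$, Doob's $L^2$-maximal inequality would instantly yield $\E[\sup_{t\in[0,T]} (M^n_t)^2] \to 0$. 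The only real (and modest) obstacle is bridging the gap between convergence in probability and in $L^1$.

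This is handled by the standard stopping-time localization. Fix $\delta>0$ and introduce the $(\f_{k/n})$-stopping time
\[
\tau_n^\delta := \inf\Bigl\{k/n :~ \sum_{i=1}^{k+1} \E\bigl[(\chi_i^n)^2 \bigm| \f_{(i-1)/n}\bigr] > \delta\Bigr\},
\]
so that, by construction of the first index exceeding the threshold, one has $\langle M^n\rangle_{t\wedge \tau_n^\delta} \leq \delta$ pointwise. Doob applied to the stopped martingale then gives $\E\bigl[\sup_{t\in[0,T]} (M^n_{t\wedge\tau_n^\delta})^2\bigr] \leq 4\delta$. Combining the trivial inclusion $\{\sup_{[0,T]}|M^n_t|>\varepsilon\} \subseteq \{\tau_n^\delta < T\} \cup \{\sup_{[0,T]}|M^n_{t\wedge \tau_n^\delta}|>\varepsilon\}$ with Markov's inequality produces
\[
\PP\Bigl(\sup_{t\in[0,T]}|M^n_t|>\varepsilon\Bigr) \leq \PP(\tau_n^\delta < T) + \frac{4\delta}{\varepsilon^2}.
\]
The first term vanishes as $n\to\infty$ by the second hypothesis (since $\{\tau_n^\delta<T\}$ forces $\sum_{i=1}^{\lfloor nT\rfloor}\E[(\chi_i^n)^2\mid\f_{(i-1)/n}]>\delta$ by monotonicity of the partial sums), and $\delta$ can be chosen arbitrarily small, which concludes $M^n\ucp 0$ and hence $Y^n \ucp Y$.
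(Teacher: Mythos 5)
The paper does not supply a proof of this lemma; it is stated as a well-known result (it is, in essence, Lemma 2.2.11 of Jacod--Protter \cite{JP12}). Your argument is the standard one and is correct: Doob-decompose $Y^n$ into the predictable drift $A^n$ and the martingale part $M^n$, note that the first hypothesis is exactly $A^n \ucp Y$, bound $\langle M^n\rangle_T$ by $\sum_i \E[(\chi_i^n)^2\mid\f_{(i-1)/n}]$ (conditional variance $\leq$ conditional second moment), and upgrade $\langle M^n\rangle_T\toop 0$ to $M^n\ucp 0$ via the stopping-time localization plus Doob's $L^2$ maximal inequality. That final step is precisely a proof of Lenglart's domination inequality in this discrete setting, so one could shorten the argument by citing Lenglart directly: $\PP(\sup_{t\le T}|M^n_t|\ge\varepsilon)\le\delta/\varepsilon^2+\PP(\langle M^n\rangle_T\ge\delta)$. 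One small bookkeeping point: the inclusion $\{\tau_n^\delta<T\}\subseteq\{\sum_{i=1}^{\lfloor nT\rfloor}\E[(\chi_i^n)^2\mid\f_{(i-1)/n}]>\delta\}$ can fail on the boundary event $\{\tau_n^\delta=\lfloor nT\rfloor/n\}$ when $nT\notin\Z$; however, on that event the stopped and unstopped processes have the same supremum over $[0,T]$, so the event may be absorbed into the second term of your union bound and the estimate goes through unchanged. Cleaner is to replace $\{\tau_n^\delta<T\}$ by $\{\langle M^n\rangle_T>\delta\}$ throughout, which is exactly the event controlled by the second hypothesis.
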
 

\begin{lem} \label{Lemblock}
Consider a sequence of stochastic processes $Y^n$ and $Y^{n,m}$. Assume that 
\begin{align*}
&Y^{n,m} \stab Z^m \quad \text{as } n\to \infty, \qquad Z^m \stab Y \quad \text{as } m\to \infty,
\qquad \text{and}
\\[1.5 ex]
& \lim_{m\to \infty} \limsup_{n\to \infty} \PP\left(\sup_{t\in [0,T]} |Y^{n,m}_t -Y^n_t|>\ep \right)=0
\qquad \text{for any } \ep>0.
\end{align*}
Then it holds $Y^n \stab Y$ as $n\to \infty$.
\end{lem}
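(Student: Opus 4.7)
The plan is to verify the defining relation of stable convergence directly, combining the two given stable limits with the uniform approximation hypothesis through a triangle inequality. Fix a bounded $\f$-measurable random variable $F$ and a bounded continuous test function $g:(C([0,T]),\|\cdot\|_\infty)\to\R$. I would write
\begin{align*}
|\E[Fg(Y^n)] - \E'[Fg(Y)]| \leq\;& |\E[F(g(Y^n) - g(Y^{n,m}))]| \\
& + |\E[Fg(Y^{n,m})] - \E'[Fg(Z^m)]| \\
& + |\E'[F(g(Z^m) - g(Y))]|.
\end{align*}
The middle term tends to $0$ as $n\to\infty$ for each fixed $m$ by the hypothesis $Y^{n,m}\stab Z^m$, and the last term tends to $0$ as $m\to\infty$ by $Z^m\stab Y$. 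Everything therefore reduces to controlling the first term in the double limit $\limsup_m \limsup_n$.

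For this first term I would reduce the class of admissible test functions to bounded Lipschitz $g$. On the Polish space $(C([0,T]),\|\cdot\|_\infty)$ the bounded Lipschitz functions form a convergence-determining class, so this reduction is legitimate by a Portmanteau-type argument (bounded continuous $g$ can be approximated uniformly on tightness-compact sets by bounded Lipschitz ones, and then one uses boundedness outside). Granting this, let $g$ have Lipschitz constant $L_g$ and supremum $M:=\|g\|_\infty$. For any $\ep>0$,
\[
|g(Y^n) - g(Y^{n,m})| \leq L_g \ep + 2M \cdot 1_{\{\|Y^n - Y^{n,m}\|_\infty > \ep\}},
\]
so that
\[
|\E[F(g(Y^n)-g(Y^{n,m}))]| \leq \|F\|_\infty\bigl( L_g \ep + 2M\, \PP(\|Y^n - Y^{n,m}\|_\infty > \ep)\bigr).
\]
Applying $\limsup_n$, then $\lim_m$, and finally letting $\ep\downarrow 0$, the right-hand side vanishes thanks to the third hypothesis of the lemma.

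Putting these bounds together, given any $\delta>0$ I choose $m$ so large that the first and third summands of the triangle inequality are each at most $\delta/3$ for all sufficiently large $n$, and then $n$ so large that the middle summand is at most $\delta/3$; this yields $\E[Fg(Y^n)]\to \E'[Fg(Y)]$ and therefore $Y^n\stab Y$.

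The only nontrivial step is the reduction to bounded Lipschitz test functions on the path space; the rest is bookkeeping. An alternative route that sidesteps this reduction is to establish tightness of $\{Y^n\}$ on $C([0,T])$ first — which follows by combining tightness of $\{Y^{n,m}\}_n$ for a fixed (but sufficiently large) $m$, itself a consequence of the stable convergence $Y^{n,m}\stab Z^m$, with the uniform approximation hypothesis — and then using uniform continuity of $g$ on a tightness-compact subset to control $|g(Y^n)-g(Y^{n,m})|$ directly. Either route works; I would choose whichever is more convenient once the rest of the proofs in the paper are finalised.
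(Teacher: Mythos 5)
The paper does not actually prove this lemma — it is stated under the remark ``which are well known results'' and used as a black box (it is essentially the stable-convergence analogue of the classical double-limit lemma, cf.\ Billingsley's Theorem~3.2 or Proposition~2.2.4 in Jacod--Protter). Your main argument — triangle inequality on $\E[Fg(Y^n)]-\E'[Fg(Y)]$, reduction to bounded Lipschitz $g$, and the bound $|g(Y^n)-g(Y^{n,m})|\le L_g\ep+2\|g\|_\infty\mathbf{1}_{\{\|Y^n-Y^{n,m}\|_\infty>\ep\}}$ — is correct and is the standard proof. One small simplification: your parenthetical justification of the Lipschitz reduction via ``tightness-compact sets'' is unnecessarily heavy. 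After splitting $F=F^+-F^-$ and normalising $\E[F]=1$, the maps $A\mapsto\E[F\,\mathbf{1}_{\{Y^n\in A\}}]$ are probability measures on $C([0,T])$, and weak convergence of probability measures on a metric space is characterised by bounded Lipschitz test functions by a direct Portmanteau argument (no tightness required).

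However, the ``alternative route'' you sketch in the final paragraph is flawed as stated. Closeness in the $\|\cdot\|_\infty$-norm to a tight family does \emph{not} give tightness in $C([0,T])$: the $\ep$-ball around a single point in $C([0,T])$ is already not precompact (take $t\mapsto\ep\sin(2\pi n t)$), so the hypothesis $\lim_m\limsup_n\PP(\|Y^{n,m}-Y^n\|_\infty>\ep)=0$ does not transfer the equicontinuity needed for Arzel\`a--Ascoli from $\{Y^{n,m}\}_n$ to $\{Y^n\}_n$. Since tightness of $\{Y^n\}$ is a \emph{consequence} of the lemma (via the implied convergence in law), trying to establish it first would also be awkwardly circular. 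Stick with the Lipschitz-reduction route; discard the alternative.
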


The following estimate is important for the mathematical arguments below. 

\begin{prop} \label{prop1}
It holds that 
\[
\sup_{x \in [-T,T]} \mathbb{P}\left(x\in (\bS,\BS),~ L^x \in [0,\varepsilon) \right) \leq C_T \varepsilon.
\]
\end{prop}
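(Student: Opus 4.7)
The plan is to apply the strong Markov property of the Brownian motion $W$ at the first hitting time $\tau_x := \inf\{s \in [0,1]\,:\, W_s = x\}$. Two structural observations are crucial: on the event $\{x \in (\bS, \BS)\}$ the Brownian motion necessarily visits the level $x$, so $\{x \in (\bS, \BS)\}\subseteq \{\tau_x < 1\}$ almost surely; and on this latter event one has $L^x_{\tau_x}=0$, so that $L^x$ equals the local time at $x$ accumulated by $W$ after time $\tau_x$.

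By the strong Markov property and the translation invariance of local time, conditionally on $\{\tau_x = t\}$ with $t<1$ the variable $L^x$ has the same law as the local time at $0$, evaluated at time $1-t$, of a Brownian motion independent of $\f_{\tau_x}$. L\'evy's identity then gives $L^x \eqschw \sqrt{1-t}\,|Z|$ with $Z \sim \n(0,1)$, whence
\[
\PP\bigl(L^x < \ep \,\bigm|\, \tau_x = t\bigr) \;\leq\; \sqrt{2/\pi}\,\cdot\,\frac{\ep}{\sqrt{1-t}}.
\]
Integrating against the explicit L\'evy density $p_x(t) = |x|(2\pi t^3)^{-1/2} e^{-x^2/(2t)}$ of $\tau_x$ (for $x\neq 0$) reduces the claim to controlling
\[
|x|\int_0^1 \frac{e^{-x^2/(2t)}}{\sqrt{t^3(1-t)}}\, dt.
\]
The substitution $s = x^2/(2t)$ followed by $u = 2s - x^2$ turns this into a Gamma-type integral whose value is a constant multiple of $e^{-x^2/2}$, giving a bound proportional to $\ep$ that is in fact uniform in $x \in \R$. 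The degenerate case $x = 0$ is handled separately via $L^0 \eqschw |W_1|$, which again yields an $O(\ep)$ estimate.

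The only non-routine step is the uniformity in $x$ for small $|x|$: near $t = 0$ the factor $t^{-3/2}$ in $p_x(t)$ is singular and is only tamed by the exponential $e^{-x^2/(2t)}$ when $x \neq 0$; simultaneously, the $|x|$ prefactor vanishes as $x\to 0$ while the bare integral diverges, and the precise cancellation of these two effects is revealed only by the explicit substitution. This is the one place where a direct computation is unavoidable; once it is carried out, the bound is actually independent of $T$, and restricting to $[-T,T]$ is superfluous.
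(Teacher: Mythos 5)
Your proof follows essentially the same route as the paper's: reduce to $\{\tau_x < 1\}$, apply the strong Markov property at $\tau_x$ to write $L^x = L^0_{1-\tau_x}(\widetilde W)$, invoke L\'evy's identity $L^0_u(\widetilde W) \eqschw |\widetilde W_u|$, condition on $\tau_x$, and integrate the resulting bound against the explicit hitting-time density. The one place you go further is the uniformity in $x$. The paper stops at ``$\E\left[1_{\{\tau_x<1\}}(1-\tau_x)^{-1/2}\right]<\infty$, which completes the proof,'' leaving implicit that this expectation is \emph{uniformly} bounded over $x\in[-T,T]$ (which is what the stated $\sup$ requires, and which is not immediate near $x=0$ where the density $p_x$ degenerates). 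Your two-step substitution $s=x^2/(2t)$, $u=2s-x^2$ resolves this cleanly: it yields $|x|\int_0^1 e^{-x^2/(2t)}\,t^{-3/2}(1-t)^{-1/2}\,dt = \sqrt{2\pi}\,e^{-x^2/2}$, uniformly bounded over all $x\neq 0$, and you correctly handle $x=0$ separately via $L^0\eqschw|W_1|$. So the approach is the same, but your version fills a small gap in the paper's argument and shows the constant is in fact independent of $T$.
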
 

\begin{proof}
Let $\tau_x$ be the first time the Brownian motion $W$ hits the level $x \in \mathbb{R}$. For $x \in (\bS,\BS)$, it must satisfy $\tau_x < 1$.
Now, applying the strong Markov property of Brownian motion, we introduce a new process $\widetilde{W}_t := W_{t + \tau_x} - W_{ \tau_x}$, where $t \geq 0$. Consequently, $\widetilde{W}$ is a new Brownian motion independent of $\tau_x$. Let $L_t^x(W)$ denote the local time of $W$ at point $x$ up to time $t$. This leads to the relation $L_1^x(W) = L_{1-\tau_x}^0(\widetilde{W})$.
A well-known result asserts that $L_{u}^0(\widetilde{W})\eqschw |\widetilde{W}_u|$ for any fixed $u$. 
Thus, by conditioning on $\tau_x$, we infer that
\begin{align*}
\mathbb{P}\left(x\in (\bS,\BS),~ L^x \in [0,\varepsilon) \right) &\leq 
\mathbb{P}\left(\tau_x< 1,~ L^x \in [0,\varepsilon) \right) \\[1.5 ex]
& = \mathbb{P}\left(\tau_x< 1,~ \sqrt{1-\tau_x} \cdot |\mathcal N(0,1)| \in [0,\varepsilon) \right)\\[1.5 ex]
& \leq C \varepsilon \E\left[1_{\{\tau_x<1\}} (1-\tau_x)^{-1/2}\right].
\end{align*}
The density of $\tau_x$ is given by $p(u)= (2\pi)^{-1/2} |x| u^{-3/2} \exp(-x^2/2u)1_{\{u>0\}}$. Hence, we conclude that
\begin{align*}
&\E\left[1_{\{\tau_x<1\}} (1-\tau_x)^{-1/2}\right] < \infty ,
\end{align*}
which completes the proof.
\end{proof}

\subsection{Law of large numbers} \label{sec3}
In this section we show the uniform convergence in probability as stated in \eqref{ucpconver}. An application of \eqref{Sbounded} implies 
the statement \eqref{LawLN}.

The  basic idea of all proofs is to consider the approximation 
$$h^{-1/2} (L^{x+h} - L^x)  \approx h^{-1/2} \sigma_x (B_{x+h} - B_x).$$
Observing this approximation we see that the increments 
$h^{-1/2} (L^{x+h} - L^x)$ and $h^{-1/2} (L^{y+h} - L^y)$ are asymptotically correlated when $|x-y|<h$.
To break this dependence we use a classical blocking  technique. For $i \geq 0$ we introduce the sets 
\begin{align*}
A_i(m) &=[i(m+1)h, i(m+1)h+mh], \\[1.5 ex] 
B_i(m) &=[i(m+1)h+mh, i(m+1)h+(m+1)h].
\end{align*}
Note that the length of $A_i(m)$ is $mh$ (big block) while $B_i(m)$ has the length $h$ (small block). 
In the first step we obtain the following decomposition: 
\begin{align*}
V(f)^h_t = Z_t^{h,m} (f) + R_t^{h,m} (f) + D_t^{h,m} (f),
\end{align*}
where 
\begin{align*}
Z_t^{h,m} (f) &:= \sum_{i\in \N: i(m+1)h+mh \in I_t}  \int_{A_i(m) }
 f\left(h^{-1/2} (L^{x+h} - L^x)  \right) dx, \\[1.5 ex]
 R_t^{h,m} (f) &:= \sum_{i\in \N:~ i(m+1)h+mh\in I_t}  \int_{B_i(m) }
 f\left(h^{-1/2} (L^{x+h} - L^x)  \right) dx,
\end{align*}
and $D_t^{h,m} (f)$ comprises the edge terms and satisfies
\bee \label{edgeconv}
D^{h,m} (f) \ucp 0 \qquad \text{as } h\to 0 
\eee
due to \eqref{ginequ} and the polynomial growth of $f$. Next, we will analyse the asymptotic 
behaviour of the processes $Z^{h,m} (f) $ and $R^{h,m} (f) $. \\ \\ 
(a) \textit{Negligibility of $R_t^{h,m} (f) $:} First, we observe the inequality 
$$\sup_{t \in [-T,T]} |R_t^{h,m} (f)|\leq R_T^{h,m} (|f|) + R_{-T}^{h,m} (|f|)$$ 
Since $f$ has polynomial growth we deduce that $|f(x)|\leq C(1+|x|^p)$ for some
$p>0$. Due to inequality \eqref{lincr} we get 
\[
\E\left[R_T^{h,m} (|f|) + R_{-T}^{h,m} (|f|) \right] \leq C m^{-1}.
\]
Thus, we conclude that
\bee \label{Rconve}
\lim_{m \to \infty} \limsup_{h \to 0}  \PP \left( \sup_{t \in [-T,T]} |R_t^{h,m} (f)| >\ep\right)=0,
\eee
for any $\ep>0$. This proves the negligibility of the term $R^{h,m} (f)$. \qed \\ \\
(b) \textit{Law of large numbers for the approximation:} We introduce the following approximation 
of the statistic $Z_t^{h,m} (f)$:
\begin{align} \label{alphadef}
\overline{Z}_t^{h,m} (f)&:= \sum_{i\in \N:~ i(m+1)h+mh\in I_t}  \alpha_i^h(m), \\[1.5 ex]
\alpha_i^h(m)&:= \int_{A_i(m) }
 f\left(h^{-1/2} \sigma_{t_i^h(m)}(B_{x+h} - B_x)  \right) dx, \nonumber 
\end{align}
where $t_i^h(m)= i(m+1)h$ is the left boundary of the interval $A_i(m)$. Due to Riemann integrability 
we deduce that 
\begin{align*}
\sum_{i\in \N:~ i(m+1)h+mh\in I_t}  \E[\alpha_i^h(m)| \mathcal{F}_{t_i^h(m)}] 
= mh \sum_{i\in \N:~ i(m+1)h+mh\in I_t} \rho_{\sigma_{t_i^h(m)}} (f) \ucp \frac{m}{m+1}V(f)_t
\end{align*}
as $h\to 0$ and $m/(m+1)V(f) \ucp V(f)$ as $m\to \infty$. By Lemma \ref{Lemmartin} it suffices to prove that
\[
\sum_{i\in \N:\leq i(m+1)h+mh\in [-T,T]}  \E\left[ \left|\alpha_i^h(m) \right|^2| \mathcal{F}_{t_i^h(m)}\right]  \toop 0 \quad \text{as } h\to 0.
\]
By \eqref{ginequ} we readily deduce that 
\[
\E[|\alpha_i^h(m)|^2| \mathcal{F}_{t_i^h(m)}]  
\leq C(mh)^2.
\]
Hence, we conclude  
\bee \label{Zhmconv}
\overline{Z}^{h,m} (f) \ucp  \frac{m}{m+1} V(f)  \text{ as } h\to 0, \qquad \text{and} \qquad 
 \frac{m}{m+1} V(f) \ucp  V(f) \text{ as } m\to \infty.
\eee
\qed \\ \\
(c) In view of steps (a) and (b) we are left to proving the statement
\begin{align} \label{finalstep}
\overline{Z}^{h,m} (f) - Z^{h,m} (f) \ucp 0. 
\end{align}
Since $f$ has polynomial growth
we have the following inequality for $\varepsilon,A>0$:
\begin{align*}
|f(x)-f(y)| \leq C\left(w_f(A, \varepsilon) +(1+|x|^p +|y|^p)(1_{\{|x|>A\}} + 1_{\{|y|>A\}}
+ 1_{\{|x-y|>\varepsilon\}}) \right),
\end{align*}
where $w_f(A, \varepsilon):=\sup \{|f(x)-f(y)|:~ |x|,|y|\leq A, ~|x-y|\leq \varepsilon\}$ is the modulus 
of continuity of $f$. Using this inequality and \eqref{lincr}, and also $1_{\{|x|>A\}} \leq A^{-1}|x|$,
$1_{\{|x-y|>\varepsilon\}} \leq \varepsilon^{-1}|x-y|$, we conclude that 
\begin{align*}
&\E\left[\sup_{t\in [-T,T]} \left|\overline{Z}_t^{h,m} (f) - Z_t^{h,m} (f)\right| \right]
\leq C \Big( w_f(A, \varepsilon) + A^{-1} \\[1.5 ex]
&+\varepsilon^{-1} \sum_{i\in \N:~i(m+1)h+mh\in [-T,T]} 
\int_{A_i(m)} \left(h^{1/2} +h^{-1/2} \E\left[\int_{x}^{x+h} |\sigma_u - \sigma_{t_i^h(m)}|^2 du \right]^{1/2} \right) dx.
\end{align*}
Since $\sigma$ is continuous and bounded we see that the third term converges to $0$ 
as $h \to 0$. On the other hand, we have that $\lim_{\varepsilon \to 0} w_f(A, \varepsilon) =0$
for a any fixed $A$.  Hence, we deduce that 
$$\overline{Z}^{h,m} (f) - Z^{h,m} (f) \ucp 0$$ 
by letting first $h\to 0$, then $\varepsilon \to 0$ and $A\to \infty$. 
Due to statements \eqref{Rconve} and \eqref{Zhmconv}, we obtain the convergence in \eqref{ucpconver}.  \qed

\subsection{Stable central limit theorem} \label{sec4}
Demonstrating the stable central limit theorem as stated in Theorem \ref{Thfunc} poses a more intricate challenge. Our approach is primarily based upon limit theorems for semimartingales, notably in works such as \cite{BGJPS06}. It is crucial to highlight that the diffusion coefficient $\sigma_x = 2 \sqrt{L^x}$ is not a semimartingale, introducing a heightened level of complexity to the proofs. We will continue to employ the blocking technique introduced in the preceding section. 

First of all, we decompose our statistic into several terms: 
\bee
U(f)^h = \sum_{k=1}^3 Z^{h,m,k} (f) + \sum_{k=1}^3 R^{h,m,k} (f) + \overline{D}^{h,m}.
\eee
Here the processes $Z^{h,m,k} (f)$, $k=1,2,3$, are big blocks approximations, which 
are defined by
\begin{align*}
&Z_t^{h,m,1} (f):= h^{-1/2} \sum_{i\in \N:~ i(m+1)h+mh \in I_t}  
\left( \alpha_i^h(m) - \E[\alpha_i^h(m)| \mathcal{F}_{t_i^h(m)}] \right), \\[1.5 ex]
&Z_t^{h,m,2} (f):= h^{-1/2} \sum_{i\in \N:~ i(m+1)h+mh\in I_t}  
\left(\int_{A_i(m) }
 f\left(h^{-1/2} (L^{x+h} - L^x)  \right) dx - \alpha_i^h(m) \right) , \\[1.5 ex]
 &Z_t^{h,m,3} (f):= h^{-1/2} \sum_{i\in \N:~i(m+1)h+mh \in I_t}  \int_{A_i(m)} \left(
 \rho_{\sigma_{t_i^h(m)}}(f) - \rho_{\sigma_x}(f)  \right) dx. 
\end{align*}
The small block processes $R^{h,m,k} (f)$, $k=1,2,3$, are introduced in exactly the same way with
the set $A_i(m)$ being replaced by $B_i(m)$ in all relevant definitions.  Finally, the process
$\overline{D}^{h,m}$ comprises all the edge terms. Similarly to the treatment of the term $D^{h,m}$
in \eqref{edgeconv}, we immediately conclude that 
\bee \label{edgeconv2}
\overline{D}^{h,m} \ucp 0 \qquad \text{as } h\to 0. 
\eee
In the following subsections we will show that all small blocks terms are negligible in the sense 
\bee \label{smallbl}
\lim_{m\to \infty} \limsup_{h\to 0} \PP\left(\sup_{t\in [-T,T]} |R_t^{h,m,k} (f)|>\ep \right)=0
\qquad \text{for any } \ep>0,
\eee
for all $k=1,2,3$. Finally, we will show that 
\[
Z^{h,m,1} (f) \stab U^{\prime m}(f), \qquad  Z^{h,m,2} (f) \ucp U^{\prime \prime m}(f),
\qquad Z^{h,m,3} (f) \ucp 0
\]
as $h\to 0$, and moreover
\bea \label{convem2}
&U^{\prime m}(f) \stab U^{\prime m}(f)= \int_0^{\cdot} w_{\si_x} dB_x + 
\int_0^{\cdot} \sqrt{v_{\si_x}^2 - w_{\si_x}^2 } dW'_x, \\[1.5 ex]
&U^{\prime \prime m}(f) \ucp U^{\prime \prime }(f)= \int_0^{\cdot} r_{a_x,\si_x} dx
\nonumber
\eea
as $m\to \infty$. Consequently, due to \eqref{edgeconv2}-\eqref{convem2}, an application of Lemma  \ref{Lemblock} and properties of stable 
convergence imply the statement of Theorem \ref{Thfunc}.

\subsubsection{Central limit theorem for the approximation}  
Recalling the notation from the previous subsection,
 we set 
 \begin{align*} 
Z_t^{h,m,1} (f)&=: 
\sum_{i\in \N:~ i(m+1)h+mh \in I_t} X_i^h(m) .
\end{align*}
We now prove the stable central limit theorem for $Z^{h,m,1} (f)$ as $h\to 0$.
According to Theorem \cite[Theorem IX.7.28]{JS03} we need to show that 
\begin{align} 
\label{cond1} 
&\sum_{i\in \N:~ i(m+1)h+mh \in I_t} \E[ |X_i^h(m)|^2 |  \mathcal{F}_{t_i^h(m)} ] \toop \int_0^t 
v_{\sigma_x}^2(m) dx \\[1.5 ex]
\label{cond2} 
&\sum_{i\in \N:~ i(m+1)h+mh \in I_t} \E[ X_i^h(m) (B_{t_i^h(m) +(m+1)h} 
- B_{t_i^h(m)}) |  \mathcal{F}_{t_i^h(m)} ] \toop c_m \int_0^t 
w_{\sigma_x} dx  \\[1.5 ex]
\label{cond3}
&\sum_{i\in \N:~ i(m+1)h+mh \in I_t} \E[ |X_i^h(m)|^2 1_{\{|X_i^h(m)|> \epsilon\}}|  \mathcal{F}_{t_i^h(m)} ] \toop 0 \quad \forall \epsilon>0 \\[1.5 ex]
\label{cond4} 
&\sum_{i\in \N:~ i(m+1)h+mh \in I_t} \E[ X_i^h(m) (N_{t_i^h(m) +(m+1)h} 
- N_{t_i^h(m)}) |  \mathcal{F}_{t_i^h(m)} ] \toop 0
\end{align}
where the last statement should hold for all bounded continuous martingales $N$ with $\langle B,N \rangle=0$, $c_m=m/(m+1)$, and the function
$v_u(m)$  will be introduced below. \\

\noindent We start by showing the condition \eqref{cond1}. A straightforward
computation using the substitution $x=hz_1, y=hz_2$ shows that   
\begin{align*}
&\E[ |X_i^h(m)|^2 |  \mathcal{F}_{t_i^h(m)} ] = h^{-1} \int_{A_i^2(m)} 
\left(\E\left[f\left(h^{-1/2} \sigma_{t_i^h(m)}(B_{x+h} - B_x)  \right)  \right. \right. \\[1.5 ex]
&\left. \left. \left. \times f\left(h^{-1/2} \sigma_{t_i^h(m)}(B_{y+h} - B_y)  \right)
\right | \mathcal F_{t_i^h(m)} \right] - \rho_{\sigma_{t_i^h(m)}}^2 (f) \right) 1_{\{|x-y|<h\}}dxdy \\[1.5 ex]
&= h \int_{[i(m+1), i(m+1)+m]^2} 
\left(\E\left[f\left( \sigma_{t_i^h(m)}(B_{z_1+1} - B_{z_1})  \right)  \right. \right. \\[1.5 ex]
&\left. \left. \left. \times f\left( \sigma_{t_i^h(m)}(B_{z_2+1} - B_{z_2})  \right)
\right | \mathcal F_{t_i^h(m)} \right] - \rho_{\sigma_{t_i^h(m)}}^2 (f) \right) 1_{\{|z_1-z_2|<1\}}dz_1dz_2.
\end{align*}
Hence, by Riemann integrability we deduce that 
\[
\sum_{i\in \N:~i(m+1)h+mh \in I_t} \E[ |X_i^h(m)|^2 |  \mathcal{F}_{t_i^h(m)} ] \toop \int_0^t 
v_{\sigma_x}^2(m) dx,
\]
where 
\begin{align*}
v_{u}^2(m) &:= \frac{1}{m} \int_{[0,m]^2} \text{cov}
\left(f\left( u(B_{z_1+1} - B_{z_1})  \right),
 f\left( u(B_{z_2+1} - B_{z_2})  \right) \right) 
\\[1.5 ex]
&\times  1_{\{|z_1-z_2|<1\}}dz_1dz_2.
\end{align*}
We note that
\bee
\lim_{m\to \infty} v_{u}^2(m) = v_{u}^2,
\eee
where $v_{u}^2$ has been introduced in  \eqref{vxdef}. \\

\noindent In the next step we show condition \eqref{cond2}. By the integration by parts formula 
we deduce the identity
\begin{align*}
&\E[ X_i^h(m) (B_{t_i^h(m) +(m+1)h} 
- B_{t_i^h(m)}) |  \mathcal{F}_{t_i^h(m)} ] \\[1.5 ex]
&=  \int_{A_i(m) } \E\left[ 
 f\left(h^{-1/2} \sigma_{t_i^h(m)}(B_{x+h} - B_x)  \right) h^{-1/2} 
 (B_{x+h} - B_x)   |  \mathcal{F}_{t_i^h(m)} \right]dx \\[1.5 ex]
 &= (mh)^{-1} w_{\sigma_{t_i^h(m)}}
\end{align*}
where the function $w_u$ has been defined in \eqref{quantwr}. This implies condition 
\eqref{cond2} by Riemann integrability. \\

\noindent To show condition \eqref{cond3}, we observe the inequality 
\[
\E\left[ |X_i^h(m)|^2 1_{\{|X_i^h(m)|> \epsilon\}}|  \mathcal{F}_{t_i^h(m)} \right]
\leq  \epsilon^{-2} \E\left[ |X_i^h(m)|^4 |  \mathcal{F}_{t_i^h(m)} \right] \leq C \epsilon^{-2}m^4 h^2.
\]
Hence, we deduce the statement of \eqref{cond3}. \\

\noindent To prove condition \eqref{cond4}, we apply a martingale representation theorem to deduce the representation
\[
X_i^h(m) = \int_{A_i(m)} \eta_{i,x}^{h,m} dB_x,
\]
where $\eta_{i}^{h,m}$ is a predictable square integrable process. Now, applying It\^o isometry, we obtain that
\[
\E\left[ X_i^h(m) (N_{t_i^h(m) +(m+1)h} 
- N_{t_i^h(m)}) |  \mathcal{F}_{t_i^h(m)} \right]= \E\left[  \int_{A_i(m)} \eta_{i,x}^{h,m} d\langle B, N\rangle _x|  \mathcal{F}_{t_i^h(m)} \right]=0
\]
Consequently, we showed condition   \eqref{cond4}. \\

\noindent Now, due to  \eqref{cond1}-\eqref{cond4}, we conclude the stable convergence $Z^{h,m,1} (f) \stab
U^{\prime m}(f)$ as $h\to 0$ with
\[
U^{\prime m}(f)_t :=  c_m\int_{I_t} w_{\si_x} dB_x + 
\int_{I_t}\sqrt{v_{\si_x}^2(m) - c_m^2w_{\si_x}^2 } dW'_x. 
\]
On the other hand, since $c_m \to 1$ and $v_{u}^2(m) \to v_{u}^2$ as $m\to \infty$, we obtain that
\bee \label{convem}
U^{\prime m}(f) \stab U^{\prime }(f)=  \int_{I_{\cdot}} w_{\si_x} dB_x + 
\int_{I_{\cdot}} \sqrt{v_{\si_x}^2 - w_{\si_x}^2 } dW'_x
\eee
as $m\to \infty$.

\subsubsection{Negligibility of the small blocks: the martingale term} 
Here we show that the small block term $R^{h,m,1} (f)$ is negligible. We 
recall
\begin{align*} 
R_t^{h,m,1} (f)= h^{-1/2} \sum_{i\in \N:~ i(m+1)h+mh \in I_t}  
\left( \beta_i^h(m) - \E[\beta_i^h(m)| \mathcal{F}_{t_i^h(m)+mh}] \right),
\end{align*}
where 
\[
\beta_i^h(m):= \int_{B_i(m) }
 f\left(h^{-1/2} \sigma_{t_i^h(m)}(B_{x+h} - B_x)  \right) dx.
\]
Since $R^{h,m,1} (f)$ is a martingale, $f$ has polynomial growth and $\sigma$
is bounded, we conclude that 
\[
\E\left[|R_T^{h,m,1} (f)|^2 +|R_{-T}^{h,m,1} (f)|^2 \right]\leq C m^{-1}.
\]
Hence, by Lemma \ref{Lemmartin} we obtain that 
\[
\lim_{m\to \infty} \limsup_{h\to 0} \PP\left(\sup_{t\in [-T,T]} |R_t^{h,m,1} (f)|>\ep \right)=0
\qquad \text{for any } \ep>0.
\]

\subsubsection{Riemann sum approximation error} 
We now consider the Riemann sum approximation error
associated with big blocks. We need to show that 
\begin{align*}
Z_t^{h,m,3} (f)=h^{-1/2} \sum_{i\in \N:~ i(m+1)h+mh \in I_t}  \int_{A_i(m)} \left( \rho_{\sigma_x}(f) 
- \rho_{\sigma_{t_i^h(m)}}(f) \right) dx \ucp 0.
\end{align*}
(The corresponding statement for the small block term $R^{h,m,3} (f)$ is shown in exactly the same way). For this purpose we introduce the threshold
\begin{align} \label{choice}
\varepsilon_h = h^r \qquad \text{for some } r \in (1/4,1/2). 
\end{align}
On each big block $A_i(m)$, we will distinguish two cases according to whether  
$L^{t_i^h(m)}<\varepsilon_h$ or $L^{t_i^h(m)}\geq\varepsilon_h$.

We start with the first case. Since $f \in C^1(\R)$
the map $u \mapsto \rho_u(f)$ is $C^1$. Also note that $\sup_{u\in A} |\rho'_u(f)|$ is bounded if $A$
is a compact set. Due to mean value theorem and boundedness of $\sigma$ we have
\begin{align*}
&1_{\left\{t_i^h(m) \in (\bS,\BS), ~L^{t_i^h(m)}<\varepsilon_h \right\}}  \int_{A_i(m)} \left| \rho_{\sigma_x}(f) 
- \rho_{\sigma_{t_i^h(m)}}(f) \right| dx \\[1.5 ex]
& \leq C 
1_{\left\{t_i^h(m) \in (\bS,\BS), ~L^{t_i^h(m)}<\varepsilon_h \right\}}   \int_{A_i(m)} \left|\sigma_x
- \sigma_{t_i^h(m)}\right| dx
\end{align*}
Now, we use Proposition \ref{prop1},  inequality \eqref{sigmaincr2}
as well as H\"older inequality with conjugates $p,q>1$, $1/p+1/q=1$, to deduce that
\[
\E\left[1_{\left\{t_i^h(m) \in (\bS,\BS), ~L^{t_i^h(m)}<\varepsilon_h \right\}}
\left|\sigma_x- \sigma_{t_i^h(m)}\right| \right] \leq C h^{1/4} \varepsilon_h^{1/q}.
\]
Thus, we obtain that
\begin{align} 
&h^{-1/2} \E\left[ \sum_{i\in \N:~i(m+1)h+mh \in [-T,T]}
1_{\left\{t_i^h(m) \in (\bS,\BS), ~L^{t_i^h(m)}<\varepsilon_h \right\}} \int_{A_i(m)} \left| \rho_{\sigma_x}(f) 
- \rho_{\sigma_{t_i^h(m)}}(f) \right| dx \right] \nonumber \\[1.5 ex]
& \leq C h^{-1/4}  \varepsilon_h^{1/q} \to 0, \label{conveZ1}
\end{align}
where we use the definition at \eqref{choice} and choose $q$ close enough to $1$.  

Now, we treat the case $L^{t_i^h(m)} \geq \varepsilon_h$. For a fixed $m\in \N$, we conclude by Borel–Cantelli lemma
and  $\ep_h=h^r$ for $r<1/2$
that there exists a $h_0>0$ such that 
$\PP$-almost surely
$\sup_{x\in A_i(m)}|L^x -L^{t_i^h(m)}| \leq \ep_h/2$  for any $h<h_0$. In the scenario
$L^{t_i^h(m)} \geq \varepsilon_h$ the latter implies that 
\bee \label{lowebo}
\inf_{x\in A_i(m)}|L^x| \geq \ep_h/2 \qquad \PP \text{-almost surely},
\eee
for $h<h_0$.
We introduce the following process:
\begin{align*}
&Z_t^{h,m,3.1} :=h^{-1/2} \sum_{i\in \N:~ i(m+1)h+mh \in I_t}  
1_{\left\{L^{t_i^h(m)} \geq \varepsilon_h \right\}}\int_{A_i(m)} \left( \rho_{\sigma_x}(f) 
- \rho_{\sigma_{t_i^h(m)}}(f) \right) dx.
\end{align*}
To handle the process $Z^{h,m,3.1}$ we will apply the decomposition  \eqref{semsigma}.
First of all, we use the mean value theorem to deduce that
\[
\rho_{\sigma_x}(f) 
- \rho_{\sigma_{t_i^h(m)}}(f) = \rho'_{\sigma_{t_i^h(m)}}(f) (\sigma_x - \sigma_{t_i^h(m)})
+  (\rho'_{\sigma_{x_i^h}}(f) - \rho'_{\sigma_{t_i^h(m)}}(f)) (\sigma_x - \sigma_{t_i^h(m)}),
\]
where $x_i^h$ is a certain point in the interval $(t_i^h(m), x)$. Now, applying \eqref{semsigma}, we decompose $Z^{h,m,3.1}=
Z^{h,m,3.2} +Z^{h,m,3.3} +Z^{h,m,3.4} $ as
\begin{align*}
Z_t^{h,m,3.2}&:= 2h^{-1/2} \sum_{i\in \N:~i(m+1)h+mh\in I_t}
1_{\left\{L^{t_i^h(m)} \geq \varepsilon_h \right\}}  \int_{A_i(m)} 
 \rho'_{\sigma_{t_i^h(m)}}(f) \left(B_x-B_{\sigma_{t_i^h(m)}}\right)dx , \\[1.5 ex]
Z_t^{h,m,3.3}&:= h^{-1/2} \sum_{i\in \N:~i(m+1)h+mh\in I_t} 
1_{\left\{L^{t_i^h(m)} \geq \varepsilon_h \right\}} \int_{A_i(m)} 
 \rho'_{\sigma_{t_i^h(m)}}(f)\left(\int_{t_i^h(m)}^x \tilde{a}_y\cdot (L^y)^{-1/2} dy \right) dx , 
 \\[1.5 ex] 
 Z_t^{h,m,3.4}&:= h^{-1/2} \sum_{i\in \N:~i(m+1)h+mh\in I_t} 
 1_{\left\{L^{t_i^h(m)} \geq \varepsilon_h \right\}} \int_{A_i(m)} 
(\rho'_{\sigma_{x_i^h}}(f) - \rho'_{\sigma_{t_i^h(m)}}(f)) (\sigma_x - \sigma_{t_i^h(m)}) dx. 
\end{align*} 
Since  $Z^{h,m,3.2}$ is a martingale, we obtain  that 
\[
\E\left[ |Z^{h,m,3.2}_{T}|^2 +  |Z^{h,m,3.2}_{-T}|^2\right] \leq C_T h.
\] 
By Lemma \ref{Lemmartin} we deduce that
\bee  \label{Zhm32}
 Z^{h,m,3.2} \ucp 0 \qquad \text{as } h\to 0. 
\eee
Due to \eqref{lowebo} we know that 
$(L^x)^{-1/2} < 2\varepsilon_h^{-1/2}$ for all $x\in A_i(m)$ and $h<h_0$. Since
$\sigma, \tilde{a}$ are bounded, we conclude that
\bee \label{Zhm33}
\E\left[\sup_{t \in [-T,T]} |Z_t^{h,m,3.3}|\right] \leq C_T h^{1/2} \varepsilon_h^{-1/2} \to 0 \qquad \text{as } h\to 0.
\eee 
To handle the last term $Z^{h,m,3.4}$ we apply a similar technique as in step (c) of Section \ref{sec3}. Notice that the quantity $\rho'_{\sigma_{x}}(f)$ is bounded, because 
$\sigma$ is bounded. We obtain that 
\[
\left|\rho'_{\sigma_{x_i^h}}(f) - \rho'_{\sigma_{t_i^h(m)}}(f) \right| \leq
C\left(w_{\rho'(f)}(A, \epsilon) +
 1_{\{ |\sigma_{x_i^h}-\sigma_{t_i^h(m)}|>\epsilon \}} \right).
\]
Since $L^x \geq \varepsilon_h/2$ for all $x\in A_i(m)$ and $h<h_0$, we deduce from representation
\eqref{semsigma} that 
\[
\E\left[|\sigma_x - \sigma_{t_i^h(m)}|^p\right] \leq C\left(h^{p/2} + h^p \varepsilon_h^{-p/2}\right)
\]
for any $p>0$, for all $x\in A_i(m)$ and $h<h_0$. 
Hence, we now obtain from \eqref{sigmaincr2} that 
\[
\E\left[\sup_{t \in [-T,T]} |Z_t^{h,m,3.4}| \right] \leq C\left(w_{\rho'(f)}(A, \varepsilon)\left(1+ h^{1/2}\varepsilon_h^{-1/2}\right)
+  \epsilon^{-3}
h^{1/2} \right)
\]
Thus, we conclude that
\bee  \label{Zhm34}
 Z^{h,m,3.4} \ucp 0 \qquad \text{as } h\to 0. 
\eee
A combination of \eqref{conveZ1} and \eqref{Zhm32}-\eqref{Zhm34} implies the statement 
\[
Z^{h,m,3} (f) \ucp 0 \qquad \text{as } h\to \infty.
\]
Similarly, $R^{h,m,3} (f) \ucp 0$ as $h\to 0$.

\subsubsection{The terms $Z^{h,m,2} (f)$ and  $R^{h,m,2} (f)$} 
In view of the previous steps, we are left with handling the terms $Z^{h,m,2} (f)$ and  $R^{h,m,2} (f)$. 
We start with the term $Z^{h,m,2} (f)$. First, we consider an approximation of  
$Z^{h,m,2} (f)$ given as 
\begin{align*}
&\overline{Z}_t^{h,m,2} (f) :=h^{-1/2}\sum_{i\in \N:~i(m+1)h+mh\in I_t}  \int_{A_i(m) } \E\left[
 f\left(h^{-1/2} (L^{x+h} - L^x)  \right) \right.\\[1.5 ex]
 &\left. -
 f\left(h^{-1/2} \sigma_{t_i^h(m)} (B_{x+h} - B_x)  \right)| \mathcal F_{t_i^h(m)} \right]dx.
\end{align*}
Applying Lemma \ref{Lemmartin} and following the same arguments as presented in part (c) of Section \ref{sec3}
(see the proof of \eqref{finalstep}),
we deduce that 
\bee
\overline{Z}^{h,m,2} (f) - Z^{h,m,2} (f) \ucp 0 \qquad \text{as } h\to 0.
\eee
We use again the mean value theorem to obtain the decomposition 
\begin{align*}
&\E\left[
 f\left(h^{-1/2} (L^{x+h} - L^x)  \right) -
 f\left(h^{-1/2} \sigma_{t_i^h(m)} (B_{x+h} - B_x)  \right)| \mathcal F_{t_i^h(m)} \right] \\[1.5 ex]
 & = \E\left[ h^{-1/2}f' \left(h^{-1/2} \sigma_{t_i^h(m)} (B_{x+h} - B_x)  \right)
 \left((L^{x+h} - L^x) - \sigma_{t_i^h(m)} (B_{x+h} - B_x)  \right)| \mathcal F_{t_i^h(m)} \right]
 \\[1.5 ex]
 & + \E\left[ h^{-1/2}\left(f'(z_i^h) -f' \left(h^{-1/2} \sigma_{t_i^h(m)} (B_{x+h} - B_x)  \right) \right)
 \right. \\[1.5 ex]
& \left. \times
 \left((L^{x+h} - L^x) - \sigma_{t_i^h(m)} (B_{x+h} - B_x)  \right)| \mathcal F_{t_i^h(m)} \right],
\end{align*}
where $z_i^h$ is a point between $h^{-1/2} \sigma_{t_i^h(m)} (B_{x+h} - B_x) $ and 
$h^{-1/2} (L^{x+h} - L^x)$. As in the previous subsection we need to discuss the cases
$L^{t_i^h(m)} \geq \varepsilon_h$  and 
$L^{t_i^h(m)}<\varepsilon_h$ separately. The easier case 
$L^{t_i^h(m)}<\varepsilon_h$ is handled in exactly the same way as presented in \eqref{conveZ1}, so we focus 
on the scenario $L^{t_i^h(m)} \geq \varepsilon_h$. Similarly to the treatment of $Z^{h,m,3.4}$ we conclude that 
\begin{align*}
&h^{-1/2}\sum_{i\in \N:~i(m+1)h+mh<t}  1_{\{L^{t_i^h(m)} \geq \varepsilon_h\}} \int_{A_i(m) } \E\left[
h^{-1/2}\left(f'(z_i^h) -f' \left(h^{-1/2} \sigma_{t_i^h(m)} (B_{x+h} - B_x)  \right) \right) \right. \\[1.5 ex]
& \left. \times 
  \left((L^{x+h} - L^x) - \sigma_{t_i^h(m)} (B_{x+h} - B_x)  \right)| \mathcal F_{t_i^h(m)} \right]dx
  \ucp 0 \qquad \text{as } h\to \infty.
\end{align*}
Thus, we need to show that 
\begin{align}
\overline{Z}_t^{h,m,2.1} &:=  h^{-1/2}\sum_{i\in \N:~i(m+1)h+mh \in I_t}  1_{\{L^{t_i^h(m)} \geq \varepsilon_h\}} \int_{A_i(m) }
\E\left[ h^{-1/2}f' \left(h^{-1/2} \sigma_{t_i^h(m)} (B_{x+h} - B_x)  \right) \right. \nonumber\\[1.5 ex]
& \left. \times
 \left( \int_x^{x+h} (a_u -a_{t_i^h(m)} )du+\int_{x}^{x+h} \left( \int_{t_i^h(m)}^u \tilde{a}_s
 \cdot (L^{s})^{-1/2} ds \right)  dB_u \right)| \mathcal F_{t_i^h(m)} \right] dx \ucp 0 \nonumber \\[1.5 ex]
 \overline{Z}_t^{h,m,2.2} &:= h^{-1/2}\sum_{i\in \N:~i(m+1)h+mh\in I_t}  1_{\{L^{t_i^h(m)} \geq \varepsilon_h\}} \int_{A_i(m) }
\E\left[ h^{-1/2}f' \left(h^{-1/2} \sigma_{t_i^h(m)} (B_{x+h} - B_x)  \right) \right. \nonumber\\[1.5 ex]
& \label{z4} \left. \times
 \left( ha_{t_i^h(m)}+2\int_{x}^{x+h} (B_u-B_{t_i^h(m)})  dB_u \right)| \mathcal F_{t_i^h(m)} \right] dx
 \ucp \frac{m}{m+1} \int_{I_t} r_{a_x,\si_x} dx,
 \end{align}
 as $h\to 0$. The statement $\overline{Z}^{h,m,2.1} \ucp 0$ is obtained along the lines of the arguments presented in the previous 
 subsection. Finally, observe the identities
 \[
 \E\left[ f' \left(h^{-1/2} \sigma_{t_i^h(m)} (B_{x+h} - B_x)\right) a_{t_i^h(m)} | \mathcal F_{t_i^h(m)} \right] = 
 a_{t_i^h(m)} \rho_{\sigma_{t_i^h(m)}}(f')
 \]
 and 
 \begin{align*}
 &2h^{-1}\int_{A_i(m) }\E\left[ f' \left(h^{-1/2} u(B_{x+h} - B_x)  \right) \int_{x}^{x+h} (B_u-B_{t_i^h(m)})  dB_u \right]dx \\[1.5 ex]
 &= 2h^{-1}\int_{A_i(m) }\E\left[ f' \left(h^{-1/2} u (B_{x+h} - B_x)  \right) \int_{x}^{x+h} (B_u-B_{x})  dB_u \right] dx \\[1.5 ex]
 & = 2mh \int_0^1 \E\left[ f' \left( u (B_{y+1} - B_y)  \right) \int_{y}^{y+1} (B_u-B_{y})  dB_u \right] dy 
  \\[1.5 ex]
 & = 2mh \int_0^1 \E\left[ f' \left( u (B_{y+1} - B_y)  \right) \int_{0}^{2} B_u  dB_u \right] dy \\[1.5 ex]
 & = mh  \int_0^1 \E\left[ f' \left( u (B_{y+1} - B_y)  \right) (B_2^2 -2) \right] dy,
 \end{align*}
 where we used the substitution $x=hy$ and the self-similarity of the Brownian motion. Hence,
 the convergence in \eqref{z4} follows from  Riemann integrability.  
 
Following exactly the same arguments we conclude that   
\[
\lim_{m\to \infty} \limsup_{h\to 0} \PP\left(\sup_{t\in [-T,T]} |R_t^{h,m,2} (f)|>\ep \right)=0
\qquad \text{for any } \ep>0.
\]
This completes the proof of stable convergence $U(f)^h \stab U(f)$.

\subsection{Proof of Theorem \ref{mainth}} \label{lastsect}
Here we prove the statements of Theorem \ref{mainth} via an application of Theorem \ref{Thfunc}
and Proposition \ref{Prop2.2}. Recall that we have already shown the convergence at  \eqref{LawLN}; see \eqref{Sbounded}. Thus we are left to proving the stable central limit theorem presented in 
Theorem \ref{mainth}.

We recall that it suffices to show all convergence results under the restriction $\bS,\BS\in[-T,T]$ for some $T>0$. Theorem \ref{Thfunc} states that $U(f)^h \stab U(f)$ on $(C([-T,T]), \|\cdot\|_{\infty})$. 
Since the mapping $F: [-T,T]^2 \times C([-T,T]) \to \R$ defined as $F((t_1,t_2),H):=H(t_1)+H(t_2)$ 
is continuous, we deduce by the properties of stable convergence and \eqref{identiV}:
\begin{align*}
U(f)_{\R}^h &\stab  U(f)_{\bS} +U(f)_{\BS} \\[1.5 ex]
&= \int_{\R}  r_{a_x,\si_x} dx + \int_{\R} w_{\si_x} dB_x + \int_{\R}  \sqrt{v_{\si_x}^2 - w_{\si_x}^2 } dW'_x,
\end{align*}
under conditions of Theorem \ref{Thfunc}. Our task now boils down to demonstrating that the sum of the first two terms in the limit are equal to zero. This assertion has already been established in \eqref{reducedlimit} under the condition $f(0)=0$, with $f\in C^3(\R)$, and $f$ and its first three derivatives exhibiting polynomial growth. Therefore, our focus shifts to confirming that this statement carries over under the weaker
assumptions of Theorem \ref{mainth}.

Let $f\in C^1(\R)$ be an arbitrary function satisfying the conditions of Theorem \ref{mainth}. Then 
there exists a sequence of functions $(f_n)_{n\geq 1}\in C^3(\R)$ that fulfils  the conditions 
$f_n(0)=0$, 
$$|f_n(x)|+ |f'_n(x)|+|f''_n(x)|+ |f'''_n(x)|\leq C(1+|x|^p) \qquad \text{for some } p>0,$$    
and
\bee \label{uniforconve}
\sup_{x\in A} \left(|f_n(x) - f(x)| + |f'_n(x) - f'(x)| \right) \to 0 \qquad \text{as } n\to 0,
\eee
for any compact set $A \subset \R$. In view of Lemma \ref{Lemblock} it suffices to show that 
\bea \label{condit1}
U(f_n)_{\R} \toop U(f)_{\R} \qquad \text{as } n\to \infty, \qquad \text{and} \\[1.5 ex]
\lim_{n\to \infty} \limsup_{h\to 0} \PP\left(\left|U(f_n)_{\R}^h -U(f)_{\R}^h \right|>\ep \right)
=0 \qquad \text{for any } \ep>0. \label{condit2}
\eea
We start by proving the statement \eqref{condit1}. For this purpose we introduce the notation 
$r_{a_x,\si_x}(f)$, $w_{\si_x} (f)$ and $v_{\si_x}(f)$  to explicitly denote the dependence of these quantities on the function $f$. Since $\bS,\BS\in[-T,T]$ it suffices to prove the convergence
\[
\E\left[\int_{-T}^T  |r_{a_x,\si_x}(f_n) - r_{a_x,\si_x}(f)| +
 |w_{\si_x}(f_n) - w_{\si_x}(f)|  +  |v_{\si_x}(f_n) - v_{\si_x}(f)| dx
 \right] \to 0 
 \]
as $n\to \infty$, to conclude \eqref{condit1}. But the latter follows directly from \eqref{uniforconve}
since the processes $a$ and $\si$ are bounded. 

Now, we show condition \eqref{condit2}. Applying Theorem \ref{Thfunc} we deduce that 
\begin{align*}
\PP\left(\left|U(f_n)_{\R}^h -U(f)_{\R}^h \right|>\ep \right) &\leq 
\PP\left(\sup_{t\in [-T,T]}\left|U(f_n)_{t}^h -U(f)_{t}^h \right|>\ep \right) \\[1.5 ex]
&\to \PP\left(\sup_{t\in [-T,T]}\left|U(f_n)_{t} -U(f)_{t} \right|>\ep \right) 
\qquad \text{as } h\to 0.  
\end{align*}
Using Markov and Burkholder inequalities, and the same arguments as in the proof of \eqref{condit1}, we
obtain that 
\[
\PP\left(\sup_{t\in [-T,T]}\left|U(f_n)_{t} -U(f)_{t} \right|>\ep \right) \to 0 \qquad \text{as } n\to \infty.
\]
Thus we deduce \eqref{condit2}, which completes the proof of Theorem \ref{mainth}.

\end{document}